\newtheorem{lettertheorem}{Theorem}
\newtheorem{theorem}{Theorem}[section]
\newtheorem{lemma}[theorem]{Lemma}
\theoremstyle{remark}
\theoremstyle{definition}
\newtheorem{definition}[theorem]{Definition}
\numberwithin{equation}{section}
\begin{document}
\vskip 0.4in
\title{\bfseries\scshape{On the iterations and the argument distribution of meromorphic functions }}
\author{\bfseries\scshape $Zheng ~Jianhua$ \thanks{E-mail address:
zheng-jh@mail.tsinghua.edu.cn}$and$ ~$Ding ~Jie$
\\
1. Department of Mathematical Science, Tsinghua University, \\
Beijing, 100084, China\\
2. School of Mathematics, Taiyuan University of Technology, \\
Taiyuan,
Shanxi, 030024, P. R. China\\
}

\date{}
\maketitle \thispagestyle{empty} \setcounter{page}{1}

\begin{abstract} \noindent
This paper consists of tow parts. One is to study the existence of a
point $a$ in the intersection of Julia set and escaping set such
that $\arg z=\theta$ is a singular direction if $\theta$ is a limit
point of $\{\arg f^n(a)\}$ under some growth condition of a
meromorphic function. The other is to study the connection between
the Fatou set and singular direction. We prove that the absent of
singular direction deduces the non-existence of annuli in the Fatou
set.
\end{abstract}

\noindent {\bf 2010 Mathematics Subject Classification:} 37F10, 30D05.

\vspace{.08in} \noindent \textbf{Keywords}: Meromorphic function;
Fatou set; Julia set; Borel direction; Filling disk.

\section{Introduction and Main Results}

It is well-known that iteration and argument distribution of a
transcendental meromorphic function basically belong to different
topics in theory of meromorphic functions. The main objects studied
in the iteration theory of meromorphic functions are the Fatou set
and Julia set and those in the argument distribution are the
singular directions and filling disks. It would be interesting to
explore their connections. We try to do that in this paper. So let
us begin with the basic knowledge and notations from these two
topics.

For a transcendental meromorphic function $f$, denote by
    $$
    f^n:= \underbrace{f\circ \cdots \circ f}_n
    $$
the $n$-th iterate of $f$, for $n\in \mathbb{N}$. The Fatou set
$F(f)$ of $f$ is the set of points in $\mathbb{C}$ each of which has
a neighborhood where $\{f^n\}$ is well defined and forms a normal
family in the sense of Montel (or, equivalently, it is
equiv-continuous). The complement $J(f)$ of $F(f)$ with respect to
$\hat{\mathbb{C}}$ is called the Julia set of $f$. Both of the sets
are completely invariant under $f$, i.e., $z\in F(f)$ if and only if
$f(z)\in F(f)$. Let $U$ be a connected component of $F(f)$, then
$f^n(U)$ is contained in a component of $F(f)$, denoted by $U_n$. If
for some positive integer $p$, $f^p(U)\subseteq U_p=U$, then $U$ is
called a periodic Fatou component and such the smallest integer $p$
is the period of $U$; If for some $n>0$, $U_n$ is periodic, but $U$
is not periodic, then $U$ is called pre-periodic; If it is neither
periodic nor pre-periodic, that is, $U_n \not= U_m$ for all pair $n
\not= m$, then $U$ is called a wandering domain .

An introduction to the basic properties of these sets for rational
function can be found in \cite{beardon, milnor} and for
transcendental meromorphic function in the survey \cite{bergweiler}
or book \cite{zheng book}.

Let $f$ be a transcendental meromorphic function. Throughout this
article, denote by $M(r, f)$ and $L(r, f)$ the maximum modulus and
minimum modulus of $f$ on circle $\{z:|z|=r\}$, respectively. For a
domain $\Omega$, by $n(r,\Omega, f=a)$ we denote the number of roots
of $f=a$ counted with the multiplicities in $\Omega\cap\{z:|z|<r\}$,
and we briefly write $n(r, f=a)$ for $n(r,\Omega, f=a)$ if $\Omega$
is the complex plane and write $n(r,f)$ for $n(r,f=\infty)$. Set
$$N(r,\Omega,f=a)=\int_1^r\frac{n(t,\Omega,f=a)}{t}{\rm d}t.$$

Moreover, define
$$m(r,f)=\frac{1}{2\pi}\int_0^{2\pi}\log^+|f(re^{i\theta})|{\rm d}\theta,$$
$$N(r,f)=\int_0^r\frac{n(t,f)-n(0,f)}{t}{\rm d}t+n(0,f)\log r,$$
and $$T(r,f)=m(r,f)+N(r,f).$$ $T(r,f)$ is known as the Nevanlinna
characteristic of $f$. For a meromorphic function $f$, the growth
order $\rho(f)$ and lower order $\lambda(f)$ are respectively
defined as
    $$
    \rho(f)=\displaystyle \overline{\lim_{r \rightarrow \infty}}\frac{\log T(r,f)}{\log r}\quad\textrm{and}\quad
    \lambda(f)=\displaystyle \lim_{\overline{r \rightarrow \infty}}\frac{\log T(r,f)}{\log
    r}.
    $$
    The Nevanlinna deficiency $\delta(\infty,f)$ of $f$ at
    $\infty$ is
    $$\delta(\infty,f)=\lim\limits_{\overline{r\to\infty}}\frac{m(r,f)}{T(r,f)}$$
    and for $a\in \mathbb{C}$, $\delta(a,f)=\delta(\infty,1/(f-a)).$
    $a\in\hat{\mathbb{C}}$ is called deficient value of $f$ if
    $\delta(a,f)>0$. The deficiency and deficient value are
    main objects studied in the modulo distribution of the Nevanlinna
    theory. The singular direction is main objects studied in the argument
    distribution of the theory.

\begin{definition}
A direction $\text{arg}z=\theta$ is called a Borel direction of
order $\rho>0$ of a meromorphic function $f$, if for arbitrary
$\varepsilon>0$ and any $a\in \hat{\mathbb{C}}$, possibly except at
most two values of $a$, we have
\begin{equation}\label{1.3}\limsup_{r \rightarrow\infty}\frac{\log^+
N(r,Z_{\varepsilon}(\theta),f=a)}{\log r}\geq \rho,\end{equation}
where $Z_{\varepsilon}(\theta)=\{z: \theta-\varepsilon<\arg
z<\theta+\varepsilon\}$; A direction $\text{arg}z=\theta$ is called
a Julia direction of $f$ if (\ref{1.3}) is replaced by
$$\lim\limits_{r\to\infty}n(r,Z_{\varepsilon}(\theta),f=a)=\infty.$$
\end{definition}

It is easily seen that a Borel direction of positive order must be a
Julia direction.

This paper consists of two parts: one is to study the connection
between singular directions and Julia sets; the other is to study
the connection between singular directions and Fatou sets.

Now let us go to our first main purpose to consider connections
between the Julia set and singular direction. For a meromorphic
function $f$, the escaping set $I(f)$ is defined by
$$I(f)=\{z:\ f^n(z)\to\infty (n\to\infty)\}.$$
The escaping set is first introduced and investigated by Eremenko
\cite{eremenko} for transcendental entire functions and by Domiguze
\cite{Dominguez} for meromorphic functions. Many important dynamical
behaviors of it have been revealed in the references, for example,
see \cite{RS,RS1, R07, R06}.

The following result was proved in Qiao \cite{qiao}.

\begin{lettertheorem}\label{thmA}
Let $f$ be a transcendental meromorphic function of lower order
$\lambda\in (0,\infty)$. If
\begin{equation}\label{1.4}\mathcal{K}:=\overline{\lim\limits_{r\rightarrow \infty}}\frac{\log
T(2r,f)}{\log T(r,f)}<\infty,\end{equation} then there exists a
point $a\in I(f)\cap J(f)$ such that for each limit point $\theta$
of $\{\arg f^n(a)\}$, $\arg z=\theta$ is a Borel direction of order
at least $\lambda$.
\end{lettertheorem}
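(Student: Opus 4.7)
The plan is to combine the filling-disk method for meromorphic functions of positive lower order with a nested backward-iteration argument, producing an orbit slaved to a sequence of filling disks whose centres pinpoint Borel directions. The regularity hypothesis $\mathcal{K}<\infty$ provides exactly the control on $T(r,f)$ needed to invoke a sufficiently strong filling-disk theorem.

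The first step is to invoke such a filling-disk theorem (in a form suitable for the class of functions with $\mathcal{K}<\infty$): one obtains a sequence of disks $B_n=\{z:|z-z_n|<\eta_n|z_n|\}$ with $|z_n|\to\infty$ and $\eta_n\to 0$, so that on each $B_n$, $f$ takes every value in $\hat{\mathbb{C}}$ outside a spherically small exceptional set $E_n$ (with $\diam_\chi E_n\to 0$) with multiplicity at least $|z_n|^{\lambda-o(1)}$, and so that every accumulation direction of $\{\arg z_n\}$ is a Borel direction of $f$ of order at least $\lambda$. Passing to a subsequence, I arrange $|z_{n+1}|>2|z_n|$ and $z_{n+1}\notin E_n$ for every $n$.

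Next I construct $a$ by a nested preimage scheme. Because $z_{n+1}\in \hat{\mathbb{C}}\setminus E_n\subset f(B_n)$, a small disk about $z_{n+1}$ inside $B_{n+1}$ admits an inverse branch of $f$ landing in $B_n$; after shrinking to avoid critical points, this branch is univalent. Composing these branches and pulling back produces a decreasing chain $K_0\supset K_1\supset K_2\supset\cdots$ of non-empty compact subsets of $B_0$, and any $a\in\bigcap_n K_n$ satisfies $f^n(a)\in B_n$ for all $n$. From $|f^n(a)|\geq(1-\eta_n)|z_n|\to\infty$ I conclude $a\in I(f)$; from the fact that the forward iterates of any neighbourhood of $a$ spread to disks escaping to $\infty$, I conclude $a\in J(f)$ via Montel. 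Finally, $|\arg f^n(a)-\arg z_n|=O(\eta_n)\to 0$, so every limit point $\theta$ of $\{\arg f^n(a)\}$ is an accumulation point of $\{\arg z_n\}$ and hence a Borel direction of order at least $\lambda$ by construction.

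The principal obstacle is the filling-disk theorem itself, in particular the assertion that accumulation directions of the centres $z_n$ are Borel directions: this rests on a Tsuji- or Ahlfors--Shimizu-type characteristic in the sectors $Z_\varepsilon(\theta)$, with $\mathcal{K}<\infty$ used to pass between the sectorial characteristic and $T(r,f)$ and thereby extract the lower bound $\rho\geq\lambda$ in \eqref{1.3}. A secondary subtlety is ensuring the inverse branches are single-valued with images inside $B_n$; this is arranged by choosing $z_{n+1}$ spherically far from $E_n$ and by applying a Koebe-type distortion estimate to control the size of the pulled-back disk.
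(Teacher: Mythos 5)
The paper presents Theorem~\ref{thmA} as a result of Qiao and does not reprove it; the nearest in-paper argument is the proof of Theorem~\ref{thm1.5}, which removes condition~(\ref{1.4}) entirely and from which Theorem~\ref{thmA} is derived. Your high-level strategy --- chain a sequence of filling disks whose centres escape with arguments tracking Borel directions, then extract a point $a$ whose orbit threads through them --- matches both Qiao's argument (as summarized in the paper's Remarks) and the proof of Theorem~\ref{thm1.5}. However, several of the steps as written contain genuine gaps.

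First, the conclusion $a\in J(f)$ does not follow from ``iterates of any neighbourhood spread to escaping disks.'' Baker domains are escaping Fatou components, so escape alone does not contradict normality via Montel. The correct route (used in the paper) is that each filling disk $\Gamma_n$ meets $J(f)$ --- because $f(\Gamma_n)$ covers $\hat{\mathbb C}$ apart from two tiny spherical caps and $J(f)$ is nonempty and completely invariant --- and then the last sentence of Lemma~\ref{lem4.1} lets one choose the orbit point inside $J(f)$.

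Second, the inverse-branch nesting does not produce a decreasing chain as stated. An inverse branch $\phi_n$ of $f$ defined on a small disk about $z_{n+1}$ sends $z_{n+1}$ to some preimage $w_{n+1}\in B_n$, but $w_{n+1}$ has no reason to lie in the small disk about $z_n$ you fixed at the previous step, so $K_{n+1}\subset K_n$ fails for the pullbacks as you have defined them; Koebe distortion does not help here because $f$ restricted to $B_n$ is far from univalent. The clean mechanism is the covering lemma (Lemma~\ref{lem4.1}): once one has $\overline{B_{n+1}}\subset f(B_n)$, the sets $G_n=\overline{B_0}\cap f^{-1}(\overline{B_1})\cap\cdots\cap f^{-n}(\overline{B_n})$ are automatically a nested sequence of nonempty compacta, with no branch-tracking or distortion control needed.

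Third, and most substantively, the inclusion $B_{n+1}\subset f(B_n)$ --- or even just $z_{n+1}\notin E_n$ --- is not free. The exceptional set $E_n$ lies in two spherical caps of radius $e^{-m_n}$, one of which may be a neighbourhood of $\infty$; if $|z_{n+1}|>e^{m_n}$ then $\chi(z_{n+1},\infty)<e^{-m_n}$ and $z_{n+1}$ may sit inside that cap. You cannot simply ``choose $z_{n+1}$ spherically far from $E_n$,'' since its location is dictated by the filling-disk existence lemma, not by you. Arranging the next filling disk inside $f(5\Gamma_n)$ while keeping its index comparable to $T(R_{n+1},f)$ is exactly the content of Cases~A, B, C in the proof of Theorem~\ref{thm1.5}: it requires the translation by $w_0$, the hyperbolic-metric estimate of Lemma~\ref{lem3.2}, and an Ahlfors--Shimizu argument to control the characteristic after translation. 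Your proposal treats this step as a formality, but it is the crux of the argument. Finally, the role of $\mathcal{K}<\infty$ is left unexplained: in Qiao's proof it bounds the modulus of the annuli $A(r/4,3\tau T^{-1}(T^N(r)))$ in which the filling disks are found, which is what makes the chaining possible, and a proof should say where this enters.
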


The quantity defined by the upper limit in (\ref{1.4}) is not larger
than $\frac{\rho(f)}{\lambda(f)}.$ Therefore, if
$0<\lambda(f)\leq\rho(f)<+\infty$, then (\ref{1.4}) holds. But the
condition (\ref{1.4}) does not exclude $\rho(f)=\infty$. In fact,
under (\ref{1.4}), we have
$$\overline{\lim\limits_{r\to\infty}}\frac{\log \log T(r,f)}{\log
r}\leq\frac{\log \mathcal{K}}{\log 2}<\infty.$$

We will prove that the condition (\ref{1.4}) is basically not
necessary. Closely related to the singular directions is the filling
disks.

\begin{definition}\ A disk $D:\ |z-z_0|<\varepsilon |z_0|$ is
called a filling disk of $f$ with index $m$, if $f$ takes all values
at least $m$ times on $D$ possibly except those values contained in
two spherical disks with radius at most $e^{-m}$.
\end{definition}

Henceforth, by $\chi(a,b)$ we denote the spherical distance between
$a$ and $b$. It is clear that a sequence of filling disks
$\Gamma_n:|z-z_n|<\varepsilon_n|z_n|$ with index $m_n$ determine
singular directions $\arg z=\theta$ where $\theta$ is a limit point
of $\{\arg z_n\}$, if $z_n\to\infty$, $m_n\to\infty$ and
$\varepsilon_n\to 0$ as $n\to\infty$; the order of the singular
direction is between $\lim\limits_{\overline{n\to\infty}}\frac{\log
m_n}{\log |z_n|}$ and $\overline{\lim\limits_{n\to\infty}}\frac{\log
m_n}{\log |z_n|}.$

\begin{theorem}\label{thm1.5} Let $f$ be a transcendental meromorphic function
with
$$\lim\limits_{r\to\infty}\frac{T(r,f)}{(\log r)^5}=\infty.$$
Then there exists a point $a\in I(f)\cap J(f)$ such that every
$f^n(a)$ lies in a filling disk
$\Gamma_n:|z-z_n|<\frac{8\pi}{\log\log R_n}|z_n|$ of $f$ with index
$m_n=c^*\frac{T(R_n,f)}{(\log\log R_n)^2(\log R_n)^3}$, where
$R_{n+1}>R^3_n$, $3R_n\geq |z_n|\to\infty (n\to\infty)$ and $c^*$ is
an absolute constant.
\end{theorem}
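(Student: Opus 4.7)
The plan is first to produce, under the growth hypothesis, a sequence of filling disks with the stated parameters, and then to extract a point $a$ whose forward orbit meets each of them by a Cantor-style nested pull-back. The growth condition $T(r,f)/(\log r)^5\to\infty$ is precisely what is needed to force the index $m_n$ to tend to $\infty$ in the conclusion.

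Step 1 (filling-disk lemma). I would prove the following quantitative statement: for every sufficiently large $R$ there exists $z_0$ with $|z_0|\le 3R$ such that the disk $\{|z-z_0|<(8\pi/\log\log R)|z_0|\}$ is a filling disk of index at least $c^* T(R,f)/((\log\log R)^2(\log R)^3)$. The mechanism is a covering argument: partition the annulus $\{R/3\le|z|\le 3R\}$ into a log-polar grid of sub-disks of the prescribed radius; if no such sub-disk were filling with the prescribed index, the second fundamental theorem applied cell by cell (with the two omitted values forming a spherical pair of radius $e^{-m}$) would bound $T(R,f)$ above by a constant multiple of $m(\log\log R)^2(\log R)^3$, contradicting the growth hypothesis. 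The factor $(\log\log R)^2(\log R)^3$ tracks the cell count together with the error terms in the second main theorem.

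Step 2 (nested pull-back). Choose $R_1$ large, and inductively $R_{n+1}>R_n^3$ large enough that Step 1 supplies a filling disk $\Gamma_n$ of the stated form. Since the spherical diameter of $\Gamma_{n+1}$ is of order $\varepsilon_{n+1}/|z_{n+1}|$, which dominates the exceptional spherical radius $e^{-m_n}$ of $\Gamma_n$ once $n$ is large, I may select a point $w_n\in\Gamma_{n+1}$ lying outside the exceptional pair of $\Gamma_n$. The filling property then provides an open neighbourhood $W_n\subset\Gamma_{n+1}$ of $w_n$ with at least $m_n$ preimage components inside $\Gamma_n$; pick one. Iterating gives nested compact sets $K_1\supset K_2\supset\cdots$ with $\overline{K_{n+1}}\subset K_n$, $f^n(K_n)\subset\overline{\Gamma_n}$, and $\diam K_n\to 0$, since each pull-back into $\Gamma_n$ occupies only an $m_n^{-1}$ fraction of the preceding set. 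Set $\{a\}=\bigcap_n K_n$.

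Step 3 (membership and main obstacle). Because $|z_n|\to\infty$ and $\varepsilon_n\to 0$, the inclusion $f^n(a)\in\overline{\Gamma_n}$ forces $f^n(a)\to\infty$, giving $a\in I(f)$. For $a\in J(f)$, note that $\chi(\infty,\Gamma_n)$ is of order $1/|z_n|\gg e^{-m_n}$ for large $n$, so $\infty$ is never in the exceptional spherical pair of $\Gamma_n$; consequently $\Gamma_n$ contains $m_n$ poles of $f$. Refining the choice of $K_n$ in Step 2 so that it always contains an iterated preimage of one of these poles exhibits $a$ as a limit of pre-poles, which lie in $J(f)$ since $\infty$ is an essential singularity of $f$; closedness of $J(f)$ then yields $a\in J(f)$. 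The hardest part will be Step 1: the constants $8\pi$, $(\log\log R)^2$, and $(\log R)^3$ must all be tracked simultaneously through the second fundamental theorem on each cell while keeping the spherical-exceptional radius equal to $e^{-m}$. Steps 2 and 3 are then routine once the parameter decay $e^{-m_n}\ll\varepsilon_{n+1}/|z_{n+1}|$ is secured.
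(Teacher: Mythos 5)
Your Step 1 is essentially a re-derivation of a known result (Lemma 3.4 in Yang's book, which the paper cites as Lemma \ref{lem4.3} and repackages as Lemma \ref{lem4.4}); re-proving it is fine in principle, though the paper simply invokes it.

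The real problem is Step 2, and it is a genuine gap, not a technicality. You select the filling disks $\Gamma_n$ \emph{independently of one another} from Step 1 and then try to pull back. The filling property of $\Gamma_n$ is a statement about $f$ restricted to the \emph{entire} disk $\Gamma_n$: all values outside two small spherical disks are taken at least $m_n$ times somewhere in $\Gamma_n$. It gives you no control whatsoever over $f$ restricted to a proper subset of $\Gamma_n$. But in a nested pull-back $K_1\supset K_2\supset\cdots$ with $f^n(K_n)\subset\overline{\Gamma_n}$, the set $f^n(K_n)$ is a tiny, location-unknown subset of $\Gamma_n$ (already at the second stage it is a single preimage component of a small neighbourhood $W_n$). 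To continue the induction you would need $f\bigl(f^n(K_n)\bigr)$ to reach $\Gamma_{n+1}$, and the filling property simply does not assert that. For all you know $f\bigl(f^n(K_n)\bigr)$ misses $\Gamma_{n+1}$ entirely. This is exactly why the paper does \emph{not} choose the $\Gamma_n$ independently: it constructs them recursively so as to ensure the genuine set-inclusion $5\Gamma_{n+1}\subset f(5\Gamma_n)$, which is what the pull-back lemma (the paper's Lemma \ref{lem4.1}) actually requires. Establishing that inclusion is the heart of the proof; it needs the hyperbolic-metric covering lemma (Lemma \ref{lem3.2}) and a three-case analysis (a point of $W_1$ omitted from $f(5\Gamma_1)$, all of $W_1$ covered, or a pole in $5\Gamma_1$), together with the translation-invariance estimates via the spherical-distortion Lemma \ref{lem4.5}. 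None of this appears in your proposal, and it cannot be finessed.

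Two further concrete slips. First, the claim that the spherical diameter of $\Gamma_{n+1}$ dominates $e^{-m_n}$ is unsupported and, with the paper's parameter choices ($R_{n+1}$ comparable to $e^{m_n}$, so $\Gamma_{n+1}$ has spherical diameter of order $1/R_{n+1}\approx e^{-m_n}$, not $\gg e^{-m_n}$), it is not clearly true; and even if true, it would only say $\Gamma_{n+1}$ cannot sit \emph{inside} an exceptional disk, which is much weaker than the inclusion $\Gamma_{n+1}\subset f(\Gamma_n)$ you need. Second, in Step 3 your argument that ``$\chi(\infty,\Gamma_n)\gg e^{-m_n}$, so $\infty$ is not in the exceptional pair'' confuses domain and range: $\chi(\infty,\Gamma_n)$ measures how far the disk $\Gamma_n$ is from $\infty$ in the \emph{source} plane, whereas the exceptional disks live in the \emph{target}; there is no reason $\infty$ cannot be an exceptional value. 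The paper instead observes directly that each $\Gamma_n$ must intersect $J(f)$ and invokes the pull-back lemma to get $a\in J(f)$; it does not need $\Gamma_n$ to contain poles. Finally, the appeal to $\operatorname{diam}K_n\to 0$ is neither needed (the standard Lemma \ref{lem4.1} argument gets by with a compactness/nesting argument) nor justified; the heuristic ``occupies only an $m_n^{-1}$ fraction'' is not a diameter bound.
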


If $f$ has the lower order $\lambda>0,$ it is easy  from Theorem
\ref{thm1.5} to obtain the result in Theorem A without the
assumption (\ref{1.4}).

\begin{theorem}\label{thm1.4+}\ Let $f$ be a transcendental meromorphic function
with $0<\lambda(f)\leq\rho(f)<+\infty$. Let $E$ be the set of
$\theta\in[0,2\pi)$ such that $\arg z=\theta$ is a Borel direction
of $f$. Then there exists a point $a\in I(f)\cap J(f)$ such that the
set of the limit points of $\{\arg f^n(a)\}$ is $E$.
\end{theorem}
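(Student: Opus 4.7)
Since $\lambda(f)>0$, the growth hypothesis of Theorem~\ref{thm1.5} is automatic: $T(r,f)\geq r^{\lambda(f)/2}$ for all large $r$, so $T(r,f)/(\log r)^5\to\infty$. The plan is to use Theorem~\ref{thm1.5} directly for the inclusion ``limit set $\subseteq E$'' and to refine the construction underlying it for the reverse inclusion.

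For the easy direction, apply Theorem~\ref{thm1.5} to obtain $a\in I(f)\cap J(f)$ together with filling disks $\Gamma_n:\,|z-z_n|<\frac{8\pi}{\log\log R_n}|z_n|$ of index $m_n=c^*T(R_n,f)/((\log\log R_n)^2(\log R_n)^3)$ such that $f^n(a)\in\Gamma_n$. Since $8\pi/\log\log R_n\to 0$, the argument of $f^n(a)$ differs from $\arg z_n$ by a quantity that vanishes as $n\to\infty$, so $\{\arg f^n(a)\}$ and $\{\arg z_n\}$ have the same set of limit points. By the remark immediately preceding Theorem~\ref{thm1.5}, each such limit point is a singular direction whose order lies between $\liminf_n \log m_n/\log|z_n|$ and $\limsup_n \log m_n/\log|z_n|$; plugging in the explicit formulas with $|z_n|\leq 3R_n$ and using $0<\lambda(f)\leq\rho(f)<\infty$, this order is a positive finite number, so every such limit point lies in $E$.

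For the reverse inclusion, the set of limit points of $\{\arg f^n(a)\}$ is closed, so it suffices to realize a fixed countable dense subset $\{\theta_k\}_{k\geq 1}$ of $E$ as a subset of the limit set. I would revisit the proof of Theorem~\ref{thm1.5}: schematically it is an inductive construction that, given a filling disk $\Gamma_n$, selects the next filling disk $\Gamma_{n+1}$ at some radius $R_{n+1}>R_n^3$ whose preimage under an appropriate branch of $f$ is contained in $\Gamma_n$, and then assembles $a$ by the standard nested-preimage argument. The argument $\arg z_{n+1}$ of the next disk is not canonical in that construction; what is really used is only the existence of a filling disk of the prescribed size and index at the chosen scale. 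Fix a surjection $\phi\colon\N\to\N$ taking each value infinitely often, and at step $n$ additionally require $|\arg z_{n+1}-\theta_{\phi(n)}|<1/n$. The needed localization follows from the defining property of a Borel direction: in every narrow sector around $\theta_{\phi(n)}$ and at every large scale there is an abundance of preimages of almost every value, from which one can extract a filling disk inside the sector by rerunning the proof of Theorem~\ref{thm1.5} restricted to it. Pulling back along these directed filling disks in the usual nested way yields a point $a$ whose orbit accumulates on every $\theta_k$, hence, by density and closedness, on all of $E$.

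The main obstacle is this localized version of Theorem~\ref{thm1.5}: one must verify that inside the sector $|\arg z-\theta|<\delta$ around any Borel direction $\theta$ of positive order a filling disk of the shape produced by Theorem~\ref{thm1.5} can be found at arbitrarily large radii while preserving the index formula $m_n=c^*T(R_n,f)/((\log\log R_n)^2(\log R_n)^3)$ up to a constant depending on $\delta$. Once this localized filling-disk statement is in place, the pull-back/diagonalization is essentially identical to the one in Theorem~\ref{thm1.5}, and combining the two inclusions gives the theorem.
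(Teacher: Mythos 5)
Your overall plan---use the shrinking angular width of the filling disks from Theorem \ref{thm1.5} for the inclusion ``limit set $\subseteq E$'' and interleave filling disks along prescribed Borel directions for the reverse inclusion---is exactly the paper's strategy. The paper keeps the backbone sequence $\{B_n\}$ coming from Theorem \ref{thm1.5}'s proof and splices into it a second family of filling disks $A_{pj}$ sitting on rays $\arg z=\theta_p$ with $\{\theta_p\}$ dense in $E$, chaining them via the covering property of filling disks with large index; your suggestion to instead constrain $\arg z_{n+1}$ at each step of the original induction is in the same spirit and would work once chaining is justified.

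The genuine gap is in how you justify the ``localized filling disk'' statement. You suggest it follows by rerunning the proof of Theorem \ref{thm1.5} inside the sector; that proof, however, rests on Lemma \ref{lem4.3}/\ref{lem4.4}, which produce a filling disk somewhere in an annulus $A(r,3R)$ with no control on the argument, so restricting to a sector is not a matter of ``rerunning'' it. The statement you need is a separate classical fact---that a Borel direction of positive order carries a sequence of filling disks along it---which the paper records as Lemma \ref{lem2.6+} and attributes to (a modification of) Rauch's theorem, Theorem 3.11 in Yang's book. If you invoke that lemma instead of appealing to a sector-restricted Theorem \ref{thm1.5}, your proposal becomes a complete argument essentially matching the paper's.
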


In \cite{RipponStallard}, Rippon and Stallard defined the slow
escaping points and proved their existence for a transcendental
meromorphic function.

\begin{theorem} \label{th2}
Let $f$ be a transcendental meromorphic function satisfying for a
$0<c<1$ and all sufficiently large $r$,
\begin{equation}\label{equ1.7}T(er,f)>\left(1+\frac{1}{(\log
r)^c}\right)T(r,f).\end{equation} Then for any increasing positive
sequence $\{a_n\}$ tending to $\infty$, there exists a point $a\in
I(f)\cap J(f)$ such that $|f^n(a)|\leq a_n$ and for each limit point
$\theta$ of $\{\arg f^n(a)\}$, $\arg z=\theta$ is a Borel direction
of order $\rho(f)$.
\end{theorem}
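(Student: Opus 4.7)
The plan is to combine a strengthened version of the filling-disk construction behind Theorem \ref{thm1.5} with a Rippon--Stallard style pull-back argument. Condition (\ref{equ1.7}) serves two distinct purposes: it forces $T(r,f)$ to grow regularly enough that the order $\rho(f)$ is attained along a family of radii of positive lower logarithmic density, and it prevents the sparse behaviour that would keep one from inserting a filling disk at essentially any prescribed scale.

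First I would sharpen the filling-disk lemma underlying Theorem \ref{thm1.5} to the following form: for every $\varepsilon>0$ there is a set $E_\varepsilon\subset[1,\infty)$ of positive lower logarithmic density such that for every $R\in E_\varepsilon$ there is a filling disk $\Gamma(R):|z-z(R)|<\delta(R)|z(R)|$ of index $m(R)$, with $|z(R)|$ comparable to $R$, $\delta(R)\to 0$ as $R\to\infty$, and $\log m(R)/\log|z(R)|\geq \rho(f)-\varepsilon$. Hypothesis (\ref{equ1.7}) is what pins $T(r,f)$ close to $r^{\rho(f)}$ on a dense enough set of radii to furnish such $E_\varepsilon$: the multiplicative lower bound on $T(er,f)/T(r,f)$ rules out the long ``flat'' stretches that would otherwise destroy the density.

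Next I would build the orbit inductively. Given $a_n\nearrow\infty$, at step $n$ I pick $R_n\in E_{1/n}$ large enough that $|z_n|:=|z(R_n)|\leq a_n$ and so that the next filling disk $\Gamma(R_{n+1})$ is far enough out for its pull-back into $\Gamma(R_n)$ to fit. Because $f$ takes on $\Gamma(R_n)$ every value outside two spherical disks of radius $e^{-m(R_n)}$, I can choose a closed Jordan disk $\overline{D}_{n+1}\subset\Gamma(R_n)$ that maps properly onto $\Gamma(R_{n+1})$ under $f$. Iterating backwards produces a nested sequence of compact sets whose intersection contains a point $a$ with $f^n(a)\in\Gamma(R_{n+1})$ for every $n$. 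The estimate $|f^n(a)|\leq(1+\delta(R_n))|z_n|\leq a_n$ places $a$ in $I(f)$ with the prescribed escape rate; the fact that the exceptional sets of the $\Gamma(R_n)$ have spherical diameter tending to $0$ forces $a\in J(f)$ by Montel's theorem; and any limit $\theta$ of $\{\arg f^n(a)\}$ is a limit of $\{\arg z_n\}$ because $\delta(R_n)\to 0$, so the filling-disk-to-direction correspondence recalled just after the definition of filling disk turns $\theta$ into the angle of a Borel direction of order $\rho(f)$.

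The principal obstacle is the first step: producing filling disks of index essentially $|z|^{\rho(f)}$ at a set of scales dense enough to meet the slow-escape constraint $|z_n|\leq a_n$ no matter how slowly $a_n\to\infty$. Condition (\ref{equ1.7}), which is considerably stronger than the bounded-ratio hypothesis (\ref{1.4}) in Theorem A, is precisely what supplies this density, and appears difficult to weaken without weakening the conclusion.
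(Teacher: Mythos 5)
Your construction advances to a new filling disk at every single iteration of $f$: you prescribe $f^n(a)\in\Gamma(R_{n+1})$ and try to make $|z(R_n)|\leq a_n$ by picking $R_n$ small. This cannot deliver an arbitrary slow escape rate, and the missing ingredient is exactly the ``dwelling'' mechanism of Lemma \ref{lem4.2}. In that lemma the covering sequence is not simply $B_{m+1}\subset f(B_m)$; there is an additional loop $B_{m(k)-p(k)}\subset f(V_k)$, $V_k\subset f(B_{m(k)})$ with bounded $p(k)$, which the paper's proof of Theorem \ref{th2} realizes via the statement $f(B_{2n+1})\supset B_{2n-1}$, $B_{2n}$ or $B_{2n+1}$. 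That loop lets the orbit cycle back and forth in a bounded range of moduli for an arbitrarily long stretch $s(k)$ of iterations before finally moving outward, and it is the free choice of the lengths $s(k)$ that matches any prescribed $a_n\nearrow\infty$. Without such a loop, a direct pull--back forces the orbit to move from one filling disk to the next each step; once the $R_n$'s are constrained to be radii where the Borel order $\rho(f)$ is actually realized, the escape rate is dictated by the spacing of those special radii, not by $a_n$.

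This brings out the second gap: your claim that (\ref{equ1.7}) forces $\rho(f)$ to be attained on a set of radii of positive lower logarithmic density is unjustified and, as stated, false. Condition (\ref{equ1.7}) is only a mild lower bound on $T(\mathrm{e}r,f)/T(r,f)$, barely stronger than what logarithmic convexity gives automatically, and it is perfectly consistent with $\lambda(f)<\rho(f)$, i.e.\ with $\log T(r,f)/\log r$ being close to $\rho(f)$ only along a sparse sequence. What (\ref{equ1.7}) does give --- and this is how the paper uses it, through the reformulation (\ref{th2 assumption}) and Lemma \ref{lem4.3} --- is that the annulus $A(r,2r^2)$ contains a filling disk of index $c^*T(r^2,f)/(\log r)^4$ for \emph{every} large $r$; the high index $\approx R^{\rho(f)}$ is then obtained by selecting a subsequence $r_n$ with $\log T(R_n,f)/\log R_n\to\rho(f)$, and the arbitrary slowness is obtained independently, by inserting long dwelling loops before moving to the next $\Gamma_j$. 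Your one--step pull--back conflates these two degrees of freedom: it tries to make the same choice of $R_n$ serve simultaneously as the scale of the filling disk, the scale where the order is attained, and the bound $a_n$, and these three constraints cannot be met at once for arbitrary $a_n$.

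Finally, a smaller issue: even granting a density statement, you extract $R_n$ from $E_{1/n}$, a family whose density may tend to $0$ as $n\to\infty$, so the diagonal selection still needs justification. The parts of your sketch dealing with the Jordan--disk pull--back, the inclusion $a\in J(f)$ via the shrinking exceptional sets, and the passage from filling disks to Borel directions are in the right spirit and match what Lemmas \ref{lem4.1}--\ref{lem4.3} provide; it is the slow--escape mechanism that is genuinely missing.
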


As we know that $T(r,f)$ is nondecreasing and logarithmic convex,
for large $r$ we always have
$$T(er,f)\geq\left(1+\frac{1}{\log r}\right)T(r,f).$$

In terms of the existence of filling disks, we can prove the
following.

\begin{theorem}\label{thm1.8} Let $f$ be a transcendental entire function. If
for $0<c<1$, (\ref{equ1.7}) holds, then $f$ has no multiply
connected Fatou components.
\end{theorem}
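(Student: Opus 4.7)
The plan is to combine the filling disk construction of Theorem \ref{thm1.5} with Baker's theorem on multiply connected Fatou components of transcendental entire functions: assuming such a component exists, one fits a filling disk inside a round annulus lying in some iterate of the component, and the filling-disk covering property immediately contradicts the boundedness of that iterate.

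First I would verify that the hypothesis (\ref{equ1.7}) implies the hypothesis of Theorem \ref{thm1.5}. Iterating (\ref{equ1.7}) from a large fixed $r_0$ yields
$$\log T(e^n r_0,f)\geq \log T(r_0,f)+\sum_{k=0}^{n-1}\log\!\left(1+\frac{1}{(\log r_0+k)^c}\right)\geq c_1 n^{1-c},$$
so $\log T(r,f)\geq c_2(\log r)^{1-c}$; since $1-c>0$ this forces $T(r,f)/(\log r)^5\to\infty$. Theorem \ref{thm1.5} then produces a sequence of filling disks $\Gamma_k=\{|z-z_k|<\varepsilon_k|z_k|\}$ with $|z_k|\to\infty$, $\varepsilon_k=8\pi/\log\log R_k\to 0$, and indices $m_k\to\infty$.

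Now suppose for contradiction that $U$ is a multiply connected Fatou component of $f$. By Baker's classical theorem and its quantitative refinement, $U$ and every iterate $U_n=f^n(U)$ are bounded, $f^n\to\infty$ locally uniformly on $U$, and for all sufficiently large $n$ the component $U_n$ contains a round annulus $A_n=\{s_n\leq|z|\leq S_n\}$ which surrounds the origin, with $s_n\to\infty$ and $S_n/s_n\to\infty$. Using the filling disks produced above, choose $k$ and $n$ large so that $\frac{s_n}{1-\varepsilon_k}\leq |z_k|\leq \frac{S_n}{1+\varepsilon_k}$; this forces $\Gamma_k\subset A_n\subset U_n\subset F(f)$. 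Then $f(\Gamma_k)\subset U_{n+1}$ is a bounded set, yet the filling-disk property says that on $\Gamma_k$ the function $f$ assumes every value of $\hat{\mathbb{C}}$ except possibly inside two spherical disks of radius at most $e^{-m_k}$, so $f(\Gamma_k)$ contains points of arbitrarily large modulus. This is the required contradiction.

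The main difficulty is the scale-matching needed to force $\Gamma_k\subset A_n$. The centers $z_k$ delivered by Theorem \ref{thm1.5} only satisfy $|z_k|\leq 3R_k$ with $R_{k+1}>R_k^3$, so the radii $|z_k|$ may have very wide gaps on the logarithmic scale, while the annuli $A_n$ need only satisfy $S_n/s_n\to\infty$ with no prescribed rate. To bridge this I would revisit the proof of Theorem \ref{thm1.5} to extract the stronger statement that for \emph{every} sufficiently large $R$ there is a filling disk of the stated form with $|z_k|\leq 3R$; choosing $R=\sqrt{s_nS_n}$ for each large $n$ then places a filling disk inside $A_n$ automatically. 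Alternatively, a sharper Bergweiler–Zheng type lower bound forcing $S_n/s_n$ to grow exponentially in $n$ would make the existing sequence of filling disks sufficient on its own.
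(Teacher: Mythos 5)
Your approach is the same in spirit as the paper's — place a filling disk inside an annulus that lies in the Fatou set, and note that a filling disk of large index cannot lie in the Fatou set — and you have correctly identified the real difficulty, the scale matching. But the fix you propose does not actually close the gap, and the missing ingredient is not the one you name.

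The problem with your fix~1 is that Lemma~\ref{lem4.4} (and the refinement of Theorem~\ref{thm1.5} you anticipate) gives you a filling disk with center $|z_k|$ somewhere in an annulus $A(r',3R)$ whose inner radius $r'$ is much smaller than $R$; you have no control of $|z_k|$ from below at the scale of $R$. Taking $R=\sqrt{s_nS_n}$ only gives $|z_k|\le 3R$ and does nothing to guarantee $|z_k|\geq s_n$, so $\Gamma_k$ need not lie inside $A_n$. What one can actually extract from Lemma~\ref{lem4.4} together with the estimate \eqref{equ1.8} is that, for any fixed $\sigma\in(0,1)$ and all large $r$, there is a filling disk lying in $A(r^{1-\sigma},3r)$ — a \emph{polynomially} thin annulus. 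For this to fit inside a Fatou annulus $A(s_n,S_n)$ one needs $S_n\gtrsim s_n^{1/(1-\sigma)}$, and the mere fact $S_n/s_n\to\infty$ that you invoke is far too weak to give this: $\log(S_n/s_n)\to\infty$ is compatible with $\log(S_n/s_n)/\log s_n\to 0$. The paper closes the gap by citing the quantitative version of the annulus theorem, Theorem~1.2 of Bergweiler--Rippon--Stallard \cite{BRS} (see also \cite{zheng 16}), which produces annuli $A(r_n^{1-\sigma_0},r_n)\subset F(f)$ of \emph{polynomial} thickness with a fixed $\sigma_0>0$; one then chooses the filling-disk parameter $\sigma<\sigma_0$ and the inclusion is automatic. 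Your alternative fix~2 (``$S_n/s_n$ grows exponentially in $n$'') is not the right strengthening either, since exponential growth in $n$ is unrelated to polynomial growth in $s_n$ and would not mesh with the widely spaced scales $R_{k+1}>R_k^3$ from Theorem~\ref{thm1.5}. In short: you need both the ``filling disk at every scale in a polynomially thin annulus'' refinement (which the paper gets directly from Lemma~\ref{lem4.4} and \eqref{equ1.8}, bypassing Theorem~\ref{thm1.5}) \emph{and} the polynomial-thickness annulus theorem from \cite{BRS}; your proposal supplies only the first.
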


Theorem \ref{thm1.8} is an improvement of Corollary 5 in
\cite{zheng} and Theorem 1.3 in \cite{zheng 16} where (\ref{equ1.7})
is replaced by the inequality $T(er,f)>dT(r,f)$ with some $d>1$.

The fast escaping set of a transcendental entire function is
introduced in \cite{BH}. It is natural to ask whether there exists a
fast escaping point whose orbit go along singular directions. In
fact, for an entire function with regular growth $\log M(er,f)>d\log
M(r,f)$ and $d>1$, such a fast escaping point exists. For example,
we consider the exponential function $\lambda e^z$ with
$0<\lambda<1/e$. Its Julia set consists of uncountably many pairwise
disjoint simple $C^\infty$-smooth curves tending to $\infty$, called
hairs (which was proved by Devaney and Krych\cite{DK} and Viana
\cite{Vi}), and all of points on the hairs possible except their
finite endpoints are the fast escaping points (which was proved by
Devaney and Tangerman \cite{DT} and Rempe, Rippon and Stallard
\cite{RRS}). Therefore, a slow escaping point must be a finite
endpoint of some hair and the other points of the hair are the fast
escaping points and go to $\infty$ under the iterates far away from
the singular directions. $f(z)=\lambda e^z$ has only two singular
directions: one is the positive imaginary axis and the other is the
negative imaginary axis. For $-\frac{\pi}{2}+\varepsilon<\arg
z<\frac{\pi}{2}-\varepsilon$ and $|z|$ large,
$$|f(z)|\geq \lambda e^{|z|\sin\varepsilon}=\lambda^{1-\sin\varepsilon}M(|z|,f)^{\sin\varepsilon}>
M(|z|,f)^{\frac{1}{2}\sin\varepsilon}.$$ Therefore, for a point
$a\in I(f)$, if no limit points of $\{\arg f^n(a)\}$ are
$\pm\frac{\pi}{2}$, in view of a result in \cite{RipponStallard},
$a$ must be in the fast escaping set of $f$. We can prove that there
exist the finite endpoints of hairs which are fast escaping to
$\infty$ along the positive imaginary axis, i.e., the argument of
the iterate points tends to $\frac{\pi}{2}$. The Eremenko point
under iteration does not go along the singular directions, see
\cite{RS19} and the maximally fast escaping points introduced by
Sixsmith \cite{Six} go far away from the singular directions.

By $A(r,R)$ we denote the annulus $\{z:\ r<|z|<R\}$ and by $B(0,r)$
the disk $\{z:\ |z|<r\}$. In \cite{baker, zheng, BRS} for
transcendental entire functions it was proved that every
multiply-connected Fatou component $U$ is wandering and for all
sufficiently large $n$, $f^n(U)$ contains a round annulus
$A(r_n,R_n)$ with $r_n\to\infty, R_n/r_n\to\infty$ as $n\to\infty$.
This result was extended in \cite{zheng 16} to transcendental
meromorphic functions which have few poles. However, for a general
meromorphic function, a multiply-connected Fatou component may not
be wandering. There exist meromorphic functions which have a
sequence of large annuli in a periodic domain.

The second of our main purposes is about non-existence of a round
annulus $A(r,R)$ with $r>R_0$ in the Fatou set of a meromorphic
function under the condition of argument distribution.

\begin{theorem}\label{thm1.2} Let $f$ be a transcendental meromorphic function
with $\infty$ as its Nevanlinna deficient value and the lower order
$\lambda=\infty$. If there is a direction $\arg z=\theta$ is not a
Borel direction of the infinite order, then there exists
$\varepsilon(r)\to 0^+$ and a $R_0>0$ such that $F(f)$ contains no
annulus $A(r,(1+\varepsilon(r))r)$ with $r>R_0$.
\end{theorem}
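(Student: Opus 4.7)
The plan is to argue by contradiction. Suppose no such $\varepsilon(r)$ exists; then there are sequences $r_n\to\infty$ and $\varepsilon_n\to 0^+$ with $A_n:=A(r_n,(1+\varepsilon_n)r_n)\subset F(f)$. Each $A_n$ is a round annulus around the origin, so it lies in a multiply-connected Fatou component of $f$; in particular $f$ has no poles on $A_n$ (otherwise $\{f^n\}$ would not be jointly defined nearby), so $f|_{A_n}$ is holomorphic. The goal is to exhibit a point of $J(f)$ inside $A_n$ for each large $n$, contradicting the assumption.

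First I would produce many filling disks. Since $\lambda(f)=\infty$ is vastly stronger than the hypothesis $T(r,f)/(\log r)^5\to\infty$ of Theorem~\ref{thm1.5}, that theorem yields $a\in I(f)\cap J(f)$ whose iterates $z_k=f^k(a)$ lie in filling disks $\Gamma_k=\{|z-z_k|<\tfrac{8\pi}{\log\log R_k}|z_k|\}$ with index $m_k\to\infty$ so rapidly that $m_k/|z_k|^\alpha\to\infty$ for every $\alpha>0$. Consequently any limit point $\theta_0$ of $\{\arg z_k\}$ is a Borel direction of infinite order, and by the hypothesis on $\theta$ we must have $\theta_0\neq\theta$. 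Next, every $\Gamma_k$ meets $J(f)$ for all large $k$: if $\Gamma_k\subset F(f)$, then by forward invariance $f(\Gamma_k)\subset F(f)$; but by the filling-disk property, $f(\Gamma_k)$ covers $\hat{\mathbb C}$ outside two spherical disks of radius $\leq e^{-m_k}$, forcing $J(f)$ into these two shrinking disks, which is impossible since $J(f)$ is a perfect unbounded set of fixed positive spherical diameter.

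The main difficulty is that Theorem~\ref{thm1.5} provides filling disks only along a sparse geometric sequence of radii ($R_{k+1}>R_k^3$), whereas to puncture every thin annulus $A_n$ I need filling disks at a logarithmically dense sequence of radii in a sector around $\theta_0$. This is where the remaining hypotheses $\delta(\infty,f)>0$ and $\lambda(f)=\infty$ become essential: the positive deficiency means that on every large circle $|z|=r$ a definite angular proportion carries large values of $|f|$, while infinite lower order means the growth is uniform across radii. From this I expect to produce, via a Valiron--Milloux-type filling-disk construction around the Borel direction $\theta_0$ of infinite order (and/or by varying the choice of $a\in I(f)\cap J(f)$ in Theorem~\ref{thm1.5} so that the orbits tile the logarithmic scales), a family of filling disks $\Gamma_k'=\{|z-z_k'|<\eta_k'|z_k'|\}$ with $\eta_k'\to 0$ and $\log|z_{k+1}'|-\log|z_k'|\to 0$. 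Then setting
$$\varepsilon(r):=2\max\bigl\{\eta_k':|z_k'|\in[r/2,2r]\bigr\}+\max\bigl\{\log|z_{k+1}'|-\log|z_k'|:|z_k'|\in[r/2,2r]\bigr\}$$
gives $\varepsilon(r)\to 0^+$, and for $r>R_0$ the annulus $A(r,(1+\varepsilon(r))r)$ contains some $\Gamma_k'$ entirely; by the second step it meets $J(f)$, which contradicts $A_n\subset F(f)$ and completes the proof. The chief obstacle is the densification step: without it, the Julia points supplied by the filling disks sit at too few scales to rule out \emph{all} thin annuli, and only the combination of $\lambda=\infty$ with $\delta(\infty,f)>0$ appears strong enough to force the required density.
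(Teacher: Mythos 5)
Your approach is genuinely different from the paper's, and unfortunately it leaves a real gap that you yourself acknowledge. The paper does not prove Theorem~\ref{thm1.2} by densifying filling disks at all: it deduces it as a corollary of Theorem~\ref{thm1.3}. The key observation is that ``$\arg z=\theta$ is not a Borel direction of infinite order'' supplies an angle $\Omega=\Omega(\alpha,\beta)$ containing the ray and two values $a,b$ for which the counting function $N(r,\Omega,f=a)+N(r,\Omega,f=b)$ grows slowly, i.e., condition (\ref{equ1.4}) and hence (\ref{2}) hold; and $\delta(\infty,f)>0$ gives (\ref{1}). The proof of Theorem~\ref{thm1.3} then splits into two cases. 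In Case A, if $f$ omits the unit disk on an annulus $A_n\subset F(f)$, Lemma~\ref{lem3.1} (the harmonic-measure/hyperbolic-metric bound on $\log L$ vs.~$\log M$) forces $B_{\alpha,\beta}(\rho_n,f)$ to be large, and via Lemma~\ref{lemma 2} and Lemma~\ref{lemma 3} this forces $N(\rho_n,\Omega,f=a)+N(\rho_n,\Omega,f=b)$ to be large, contradicting (\ref{2}). In Case B, the annulus meets the unit disk after one iterate, and Lemma~\ref{lem3.2} is used to show that the forward-invariant Fatou component would have to swallow an unbounded region, a contradiction. No filling disks are used in this argument.

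The gap in your proposal is precisely the ``densification step.'' Lemma~\ref{lem4.4} produces a filling disk of large index inside $A(r,3R)$ with $r=\tfrac{1}{2e}T^{-1}\bigl(T(R,f)/(12\log R),f\bigr)$, and neither $\lambda(f)=\infty$ nor $\delta(\infty,f)>0$ gives you control on $\log R-\log r\to 0$; indeed you would need something like a regularity hypothesis of the form (\ref{equ1.7}) to compress that gap, and even then you obtain a filling disk only once per $A(r^{1-\sigma},r)$ for small $\sigma$, which is far from sufficient to puncture every annulus $A(r,(1+\varepsilon(r))r)$ with $\varepsilon(r)\to 0$ over \emph{all} large $r$. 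Moreover, the hypothesis that $\theta$ is \emph{not} a Borel direction of infinite order is a constraint that you never actually exploit: you merely infer that your orbit's arguments avoid $\theta$, which plays no role in producing extra filling disks. In the paper's argument, by contrast, that non-Borel hypothesis is exactly what supplies the quantitative bound (\ref{2}) that is contradicted, so it is load-bearing rather than decorative. Your second step (a filling disk with large index cannot lie wholly in $F(f)$) is correct and is used in the paper's proof of Theorem~\ref{thm1.8}, but the first and third steps as written do not close.
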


In fact, we will establish the following general result with Theorem
\ref{thm1.2} as a corollary.

\begin{theorem} \label{thm1.3}
Let $f$ be a transcendental meromorphic function with the lower
order $\lambda$ and
\begin{equation}\label{1}\lim\limits_{\overline{r\to\infty}}\frac{\log
M(r,f)}{T(r,f)}>0.\end{equation} Assume that there are two distinct
values $a$ and $b$ and an angle
$\Omega=\Omega(\alpha,\beta)=\{z:\alpha<\arg z<\beta\}$ such that
$\lambda>\frac{\pi}{\beta-\alpha}=\omega$ and
\begin{equation}\label{2}\overline{\lim\limits_{r\to\infty}}\frac{\log
(N(r,\Omega,f=a)+N(r,\Omega,f=b))}{\log
T(r,f)}<1-\frac{\omega}{\lambda}.\end{equation} Let $\phi(r)$ be a
positive function in $[1,\infty)$ such that $\phi(r)\to\infty$ and
$\phi(r)/\log T(r,f)\to 0$ as $r\to\infty$ and $\phi(r)r/\log
T(r,f)>2\inf\limits_{z\in J(f)}|z|$ for $r\geq 1$.

Then there exists a $R_0>0$ such that the Fatou set $F(f)$ contains
no round annuli with the form $\{r<|z|<R\}$ for $r>R_0$ and
$R>(1+\phi(r)/\log T(r,f)) r$.
\end{theorem}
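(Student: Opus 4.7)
My plan is to argue by contradiction, refining the filling-disk construction of Theorem \ref{thm1.5} to produce filling disks of $f$ that fit inside the hypothesised annuli, and then using forward invariance of $F(f)$ to derive a contradiction. Suppose there is a sequence $r_n\to\infty$ with annuli $A_n=\{z:r_n<|z|<R_n\}\subset F(f)$ and $R_n\geq(1+\phi(r_n)/\log T(r_n,f))r_n$. The hypothesis $\phi(r)r/\log T(r,f)>2\inf_{z\in J(f)}|z|$ guarantees the width $R_n-r_n$ of each annulus eventually exceeds $2\inf_{z\in J(f)}|z|$, so both $\{|z|<r_n\}$ and $\{|z|>R_n\}$ meet $J(f)$, and in particular each $A_n$ separates the Julia set.

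The core step is to establish the following angular refinement of Theorem \ref{thm1.5}: under the hypotheses $\lambda>\omega$, (\ref{1}), and (\ref{2}), there is a sequence of filling disks $\Gamma_m:|z-z_m|<\delta_m|z_m|$ of $f$ with index tending to $\infty$, centres $z_m\to\infty$, limiting direction $\theta^*=\lim\arg z_m\in[0,2\pi)\setminus(\alpha,\beta)$, and vanishing relative radii $\delta_m=o(\phi(|z_m|)/\log T(|z_m|,f))$, with $|z_m|$ chosen freely so that one may arrange $|z_m|\in(r_{n(m)},R_{n(m)})$. The mechanism is that (\ref{2}), combined with Nevanlinna's second fundamental theorem in the angle $\Omega$ in its Ahlfors--Tsuji form (as presented in \cite{zheng book}) and the strict gap $\lambda>\omega$, forces $f$ to grow in $\Omega$ at order strictly smaller than $\lambda$; condition (\ref{1}) then compels the mass of $T(r,f)$ to be carried by directions outside $\Omega$, yielding the filling disks there, whose radii $\delta_m$ can be driven to $0$ at any preassigned rate using the slack $1-\omega/\lambda$ from (\ref{2}).

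Once these filling disks are in hand, since $\delta_m|z_m|<(R_{n(m)}-r_{n(m)})/2$ eventually, the entire disk $\Gamma_m$ lies in $A_{n(m)}\subset F(f)$; forward invariance then gives $f(\Gamma_m)\subset F(f)$. But the filling-disk property says $f(\Gamma_m)\supset\hat{\mathbb{C}}\setminus(D_1\cup D_2)$ for two spherical disks $D_j$ of radius at most $e^{-m}$, forcing $J(f)\subset D_1\cup D_2$. This is impossible for $m$ large, since the Julia set of a transcendental meromorphic function is a perfect unbounded set of positive spherical diameter, yielding the contradiction. The main technical obstacle is the refined filling-disk construction of the middle paragraph: Theorem \ref{thm1.5} alone produces radii only of order $1/\log\log|z_m|$, whereas here I need radii as small as $\phi(r)/\log T(r,f)$. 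Obtaining this sharper rate requires a quantitative estimate of the Ahlfors--Tsuji angular characteristic on closed sub-angles of $\Omega$, exploiting the strict inequality $\lambda>\omega$ together with the explicit deficit $1-\omega/\lambda$ provided by (\ref{2}).
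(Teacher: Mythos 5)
Your proposal takes a genuinely different route from the paper, but it hinges on an ``angular refinement of Theorem \ref{thm1.5}'' that you flag as the main technical obstacle and do not prove, and I do not see how it can be proved from the lemmas available. The filling-disk machinery (Lemma \ref{lem4.3}, Lemma \ref{lem4.4}) produces a filling disk somewhere in a region of the form $A(r,2R)$ or $A(r,3R)$, where $R$ and $r$ are tied by a growth condition, but it gives no control over \emph{where} inside that region the disk sits nor over its \emph{direction}; yet your plan requires simultaneously (i) relative radius $\delta_m = o(\phi(|z_m|)/\log T(|z_m|,f))$, which is enormously smaller than the $\sim 1/\log\log R$ or $\sim 1/\log r$ radii those lemmas deliver, (ii) the centre $z_m$ lying inside a prescribed thin annulus $A_{n(m)}$ of ratio tending to $1$, and (iii) $\arg z_m$ converging to a point outside $\Omega$. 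Pushing $q$ large enough in Lemma \ref{lem4.3} to shrink the radius to the required size also divides the index $m = c^* T(R,f)/(q^2(\log(R/r))^2)$ by $q^2$, and whether the index still tends to $\infty$ depends on the growth of $T$, which here is governed only by $\lambda > \omega$ and $(\ref{1})$--$(\ref{2})$; moreover, nothing in the construction localizes the disk in a band of width $O(r\phi(r)/\log T(r,f))$. The heuristic that $(\ref{2})$ plus the angular second fundamental theorem ``forces the mass of $T$ outside $\Omega$, yielding filling disks there'' is plausible qualitatively but would itself require a new theorem of Borel-direction type considerably finer than Lemma \ref{lem2.6+}, and you do not supply it.

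The paper avoids the obstacle entirely: it never tries to place filling disks in the thin annuli. Instead, arguing by contradiction, it splits into two cases. In Case A, where a subsequence satisfies $f(A_n)\subset\{|z|>1\}$, it applies the hyperbolic annulus estimate (Lemma \ref{lem3.1}) on the hypothesised thin annulus to compare $\log L(\rho_n,f)$ with $\log M(\rho_n,f)$ on the mid-circle, turning the near-circularity of $A_n$ into a near-equality of $\log L$ and $\log M$; it then bounds the angular quantity $B_{\alpha,\beta}(\rho_n,f)$ from below by $\log L(\rho_n,f)\gtrsim\lambda_n\log M(\rho_n,f)\gtrsim\lambda_n T(\rho_n,f)$ via $(\ref{1})$, and from above via Lemma \ref{lemma 2}--\ref{lemma 3} (angular second main theorem) in terms of $N(\rho_n,\Omega,f=a_i)$, which $(\ref{2})$ keeps small. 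Comparing the two using $\lambda>\omega$ yields the contradiction. In Case B, where $f(C_n)$ meets $\{|z|\le 1\}$, it again uses the hyperbolic lemma (Lemma \ref{lem3.2}) to show $f(B_n)$ covers larger and larger annuli, forcing a Fatou component to contain $\{|z|>1\}$, which is absurd. Both cases hinge on the hyperbolic metric estimates, which your proposal does not invoke at all. If you want to salvage the filling-disk route, you would need to first prove a localized, directional filling-disk theorem of the strength described above, and I would expect that to be at least as hard as the argument the paper actually gives.
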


Furthermore, under the assumptions of Theorem \ref{thm1.3}, in view
of a result of \cite{Zheng3}, for any compact subset $W$ of a Fatou
component with $f^n|_W\to\infty (n\to\infty)$, there exists a
$M(W)>0$ such that for any pair $z$ and $z'$ in $W$,
$$M^{-1}(W)|f^n(z)|\leq |f^n(z')|\leq M(W)|f^n(z)|,\ \forall\ n\in \mathbb{N}.$$

If $\infty$ is a Nevanlinna deficient value of $f$, i.e.,
$\delta(\infty,f)=\lim\limits_{\overline{r\to\infty}}\frac{m(r,f)}{T(r,f)}>0$,
then obviously, (\ref{1}) holds. The quantity defined by the lower
limit in (\ref{1}) was introduced and studied in \cite{Petrenko}. It
is easy to see that (\ref{2}) follows from
\begin{equation}\label{equ1.4}\overline{\lim\limits_{r\to\infty}}\frac{\log
(N(r,\Omega,f=a)+N(r,\Omega,f=b))}{\log
r}<\lambda-\omega.\end{equation} This means that the convergence
exponents of $a$-points and $b$-points are smaller than
$\lambda-\omega$. But (\ref{2}) does not exclude the possibility of
that the convergence exponent equals to $\lambda-\omega$, i.g., if
$\lambda=\infty$, the exponent allows to be $\infty$. Therefore,
Theorem \ref{thm1.3} is a generalization of Corollary 4 in
\cite{zheng}. And the condition (\ref{equ1.4}) has something to do
with the singular directions of a meromorphic function. If $\arg
z=\theta$ is not a Borel direction of $f$ with the order
$\lambda-\omega$, then there exist an angle $\Omega$ containing the
ray and two values $a$ and $b$ such that (\ref{equ1.4}) holds.
Therefore Theorem \ref{thm1.2} follows from Theorem \ref{thm1.3}. In
view of a result of Valiron (see Theorem 2.7.5 in \cite{zheng book
2}), if $f$ has no Borel directions of order $\lambda-\omega$ in
$\Omega$, then (\ref{equ1.4}) holds for some $a$ and $b$.

From the proof of Case B in the proof of Theorem \ref{thm1.3}, we
can establish the following

\begin{theorem}\ Let $f$ be a meromorphic function with (\ref{1}) without (\ref{2}) and
for some $a\in\mathbb{C}$, $\delta(a,f)>0$. Then the results in
Theorem \ref{thm1.3} holds.
\end{theorem}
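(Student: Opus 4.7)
The plan is to argue by contradiction along the lines of Case B in the proof of Theorem \ref{thm1.3}, with the deficiency hypothesis $\delta(a,f)>0$ taking over the role that condition (\ref{2}) played there. Suppose that for arbitrarily large $r$ the Fatou set $F(f)$ contains a round annulus $A_r=\{r<|z|<R\}$ with $R\geq (1+\phi(r)/\log T(r,f))r$. The goal is to exhibit in each such annulus a pair of points whose spherical $f$-images lie at positive distance, contradicting a spherical modulus of continuity forced on $f$ by its confinement inside a Fatou annulus.

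First, condition (\ref{1}) supplies a sequence $r_n\to\infty$ with $\log M(r_n,f)\geq\eta\,T(r_n,f)$ for some $\eta>0$. Using the side condition $\phi(r)r/\log T(r,f)>2\inf_{z\in J(f)}|z|$ inherited from Theorem \ref{thm1.3}, we adjust $r_n$ slightly so that the corresponding annulus contains a circle $|z|=r_n'$ with $r_n'$ close to $r_n$, and on that circle a point $z_n^{(1)}$ where $|f(z_n^{(1)})|$ is comparable to $M(r_n,f)$. Second, the deficiency hypothesis gives $m(r,1/(f-a))\geq (\delta(a,f)/2)\,T(r,f)$ off a set of finite logarithmic measure, so on the same circle (passing if necessary to a subsequence) a standard distributional estimate for $\log^{+}|1/(f-a)|$ yields a point $z_n^{(2)}$ with $\log(1/|f(z_n^{(2)})-a|)\geq\eta'\,T(r_n,f)$ for a constant $\eta'>0$ depending only on $\delta(a,f)$. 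This is the substitute, in Case B, for the few-$a$-points-in-$\Omega$ consequence of (\ref{2}) exploited in Case A.

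The quantitative contradiction is the imported content of Case B: since $A_n\subset F(f)$, Marty's criterion applied to the normal family $\{f^n\}$ on $A_n$, combined with comparison between the hyperbolic metric of the annulus and the ambient Fatou component, yields an oscillation estimate
$$\chi(f(z),f(w))\leq \frac{C}{\log(R_n/r_n)} \quad\text{for } z,w \text{ on a fixed circle inside } A_n,$$
with $C$ a universal constant. For $z=z_n^{(1)}$ and $w=z_n^{(2)}$ the left-hand side is bounded below by $\chi(\infty,a)/2>0$, while $\log(R_n/r_n)\geq \phi(r_n)/(2\log T(r_n,f))\to\infty$ by the hypothesis on $\phi$. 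This contradiction completes the argument.

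The main obstacle is verifying that Case B of the proof of Theorem \ref{thm1.3} really uses (\ref{2}) only through a lower bound on $m(r,1/(f-a))$ on circles, and not through any angular structure of $a$-points. Once this is confirmed, the deficiency hypothesis $\delta(a,f)>0$ inserts verbatim in place of (\ref{2}), and the rest of the proof is unchanged.
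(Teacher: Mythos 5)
Your identification of the two key ingredients is right: condition~(\ref{1}) gives a point on a circle $|z|=\rho$ inside the annulus where $\log|f|\gtrsim T(\rho,f)$, and $\delta(a,f)>0$ gives (via $m(\rho,1/(f-a))\gtrsim T(\rho,f)$) a point on the same circle where $|f-a|$ is exponentially small, hence $|f|$ is bounded. That is exactly the pair of points the paper wants. Your diagnosis of where the hypothesis is consumed is slightly off, though: condition~(\ref{2}) is used \emph{only} in Case~A (the angular Nevanlinna argument), not in Case~B. What $\delta(a,f)>0$ buys is the ability to skip Case~A entirely, by guaranteeing directly that $f(C_n)$ meets a fixed bounded disk for every $n$.

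The genuine gap is in your contradiction mechanism. You claim an oscillation bound $\chi(f(z),f(w))\le C/\log(R_n/r_n)$ for $z,w$ on a circle inside $A_n$, attributed to Marty's criterion plus hyperbolic-metric comparison. This inequality is false in general and is not what Case~B proves. Normality of $\{f^n\}$ gives only local boundedness of spherical derivatives, with no uniformity across the annuli $A_n$ which drift to infinity; and the Schwarz--Pick contraction from $d_{A_n}$ to the hyperbolic metric of the target Fatou component does not translate into a spherical-distance bound, because the target component can itself be huge (its hyperbolic density near $f(z)$ can be arbitrarily small relative to the spherical density). Indeed the whole point of Case~B is the opposite conclusion: $f(B_n)$ is forced to cover an enormous round annulus. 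The paper's Case~B applies Lemma~\ref{lem3.2} to $g=f-c$ (with $c\in J(f)$ of minimal modulus, so $g\neq 0$ on the Fatou annulus) using the bounded-modulus point $z_0$ and the huge-modulus point $z_n$, obtaining $g(B_n)\supset A(d_n^{-1}t_n,d_nt_n)$ with $d_n>\rho_n^5$. Iterating this once more on the resulting annulus $D_n$ then yields $A(1,R_n)\subset U$ for all large $n$, hence $U\supset\{|z|>1\}$, contradicting the unboundedness of the Julia set. To repair your proof you would need to replace the Marty oscillation estimate by this covering-and-iteration argument.
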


Next let us give an example to show that the condition (\ref{1}) is
necessary.

\begin{theorem}\label{thm1.5+}\ For any given $\lambda>1$, there exists a meromorphic function of order and lower order
equal to $\lambda$ such that (\ref{2}) holds on the upper half plane
and lower half plane and its Julia set lies on the real axis and its
Fatou set contains a sequence of annuli $A(r_n,dr_n)$ with $d>1$ and
$r_n\to\infty$ as $n\to\infty$.
\end{theorem}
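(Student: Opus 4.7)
The plan is to construct $f$ explicitly as a Herglotz--Pick meromorphic function whose real poles are placed in geometric clusters on the real axis, tuned so that $f$ sends each real ``gap'' between consecutive clusters into a later gap. Fix $\lambda>1$ and set $K:=4$, $B:=16=K^{2}$, $R_n:=K^n$. In each of $[R_n+R_n/4,\,2R_n-R_n/4]$ and its reflection $[-2R_n+R_n/4,\,-R_n-R_n/4]$, place $k_n:=\lfloor R_n^\lambda\rfloor$ evenly-spaced real points, enumerated globally as $\{\gamma_m\}$. Define
\[
f(z)=Bz+\sum_m m_m\,\frac{1+\gamma_m z}{\gamma_m-z},
\]
with positive masses $m_m=|\gamma_m|^{-(\lceil\lambda\rceil+2)}$, so the series converges absolutely and its tail contributes $O(1)$ uniformly on each gap.

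Each summand has positive imaginary part on $\mathbb H := \{\mathrm{Im}\,z > 0\}$, so $f$ maps $\mathbb H$ strictly into itself; by Montel's theorem $\mathbb H\subseteq F(f)$, and by the symmetric placement of poles $-\mathbb H\subseteq F(f)$, yielding $J(f)\subseteq\mathbb R\cup\{\infty\}$. On each real gap $G_n:=(2R_n,R_{n+1})$ the distance from any point to the nearest pole is $\geq R_n/4$, so a cluster-by-cluster estimate yields $f(x)=Bx+O(1)$ on $G_n$. With $B=K^2$ this gives $f(G_n)=(2R_{n+2}+O(1),\,R_{n+3}+O(1))$, which for large $n$ overflows $G_{n+2}$ by at most $O(1)$ into the pole-free inner buffers of clusters $n+2$ and $n+3$. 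Hence $f(G_n)\cap\{\gamma_m\}=\emptyset$, and iteratively every pre-pole of $f$ lies in $\bigcup_n\bigl([R_n,2R_n]\cup[-2R_n,-R_n]\bigr)$, so $J(f)\cap G_n=\emptyset$ for all large $n$. Combined with $\mathbb H\cup(-\mathbb H)\subseteq F(f)$, the round annulus $A(2R_n,R_{n+1})\subseteq F(f)$, giving $A(r_n,dr_n)\subseteq F(f)$ with $r_n=2R_n\to\infty$ and $d=2$.

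The pole counting function satisfies $n(r,f)=2\sum_{m\leq n}k_m\sim 2R_n^\lambda$ on $r\in[2R_n,R_{n+1}]$; since $\log R_n/\log R_{n+1}=n/(n+1)\to 1$, one has $\log n(r,f)/\log r\to\lambda$ uniformly, and combined with the standard estimate $m(r,f)=O(\log r)$ for Herglotz--Pick functions, $T(r,f)\sim N(r,f)=r^{\lambda+o(1)}$, so $\rho(f)=\lambda(f)=\lambda$. To verify (\ref{2}), take $\Omega=\mathbb H$ (so $\omega=1<\lambda$) with $a=0$, $b=1$: since $f(\mathbb H)\subseteq\mathbb H$, neither $f=0$ nor $f=1$ has a solution in $\mathbb H$, so $N(r,\mathbb H,f=0)+N(r,\mathbb H,f=1)=0$ and (\ref{2}) holds trivially; the same argument applies to $-\mathbb H$.

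The principal obstacle is the verification that the \emph{real}-axis gaps $G_n$ lie in $F(f)$, not merely the open half-planes $\pm\mathbb H$. This is enforced by the quantitative interplay of the cluster ratio $K=4$, the affine growth rate $B=K^2=16$, and the inner buffer of width $R_n/4$ inside each cluster: the $O(1)$ Herglotz tail correction is absorbed by the buffer, so $f$ sends each gap into the pole-free portion of a later gap, preventing any iterated pre-pole from reaching a gap.
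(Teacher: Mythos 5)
Your approach is genuinely different from the paper's. The paper's function $g$ is chosen with $g(0)=0$ and $|g'(0)|<1$, so that $0$ is an attracting fixed point, and the round annuli $A(4r_n/3,5r_n/3)$ are shown to map into an invariant disk $B(0,M)$ in the attracting basin, which is manifestly contained in $F(g)$. You instead make $0$ repelling (dominant slope $B=16>1$) and arrange for orbits in the real gaps $G_n$ to escape to infinity. Both constructions rest on the same half-plane--preserving (Herglotz--Pick) structure to force $J(f)\subseteq\mathbb R\cup\{\infty\}$, and your order/lower-order computation, your estimate $f(x)=Bx+O(1)$ on the gaps, and your verification of condition (\ref{2}) using $a=0,\ b=1$ together with $f(\mathbb H)\subset\mathbb H$ are all sound. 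What your route buys is an escaping-dynamics flavor consistent with the rest of the paper; what it costs is that Fatou-set membership of the real gaps is no longer automatic, and this is where your argument has a genuine hole.

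From $f(G_n)\cap\{\gamma_m\}=\emptyset$ you jump to ``iteratively every pre-pole of $f$ lies in $\bigcup_n([R_n,2R_n]\cup[-2R_n,-R_n])$, so $J(f)\cap G_n=\emptyset$,'' but neither part of this is established. You have controlled first-order pre-poles only; for higher orders you must show $f^k(G_n)$ remains pole-free for every $k\ge 0$. The additive error in $f^k(x)\approx B^k x$ grows like $B^k$ while the pole-free buffers have width $\asymp R_{n+2k}/4 = R_n\cdot B^k/4$, so the ratio is $\asymp 4/R_n$ and the iteration does in fact stay pole-free for $n$ large --- but this has to be carried out, and in addition the sign of your $O(1)$ term is uncontrolled, so $f(\overline{G_n})\subset G_{n+2}$ in the stated form need not hold and you must first shrink to, say, $G_n'=(2R_n+1,R_{n+1}-1)$ to get the nesting $f(\overline{G_n'})\subset G_{n+2}'$. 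More importantly, even granting the orbit of $G_n$ avoids $\infty$, this does not by itself give $G_n\subset F(f)$: you still need a normality statement. One clean way is to observe that $W=\mathbb H\cup(-\mathbb H)\cup\bigcup_{n\ge n_0}(G_n'\cup(-G_n'))$ is an open hyperbolic set with $f(W)\subset W$, hence $W\subset F(f)$ by Montel; alternatively invoke $J(f)=\overline{\bigcup_{k\ge 0} f^{-k}(\infty)}$ (valid here since $f$ has infinitely many poles) and check that pre-poles do not accumulate inside $G_n'$. Without one of these steps the final claim that the annuli $A(r_n,dr_n)$ lie in $F(f)$ is not proved.
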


\section{Proofs of Theorems \ref{thm1.5}, \ref{thm1.4+}, \ref{th2} and \ref{thm1.8}}

\subsection{Some Lemmas}

The following result is natural; see Lemma 1 in
\cite{RipponStallard}.

\begin{lemma}\label{lem4.1}\ Let $f$ be a meromorphic function and let $\{E_n\}_{n=0}^\infty$ be a sequence of compact
sets in $\mathbb{C}$. If $$E_{n+1}\subset f(E_n),\ \text{for}\ n\geq
0,$$ then there exists a $\xi$ such that $f^n(\xi)\in E_n$, for
$n\geq 0$. If $E_n\cap J(f)\not=\emptyset$, for $n\geq 0$, then
$\xi$ can be chosen to be in $J(f)$.
\end{lemma}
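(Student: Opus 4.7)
The plan is a compactness and diagonal extraction argument driven by the hypothesis $E_{n+1}\subset f(E_n)$, which says that every prescribed point of $E_{n+1}$ has a preimage inside $E_n$. First, for each $n\geq 1$ I would pick any $x_n\in E_n$ and use the hypothesis repeatedly to produce a finite backward orbit $x_0^{(n)},x_1^{(n)},\dots,x_n^{(n)}=x_n$ with $x_k^{(n)}\in E_k$ and $f(x_k^{(n)})=x_{k+1}^{(n)}$ for $0\leq k<n$. Setting $\xi_n:=x_0^{(n)}\in E_0$, one obtains a sequence of starting points with $f^k(\xi_n)\in E_k$ for all $0\leq k\leq n$, that is, a finite initial segment of the desired orbit.

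Next, since each $E_k$ is compact, a standard diagonal argument produces a subsequence (still written $\xi_n$) along which $f^k(\xi_n)\to y_k\in E_k$ simultaneously for every $k\geq 0$, where $y_0=:\xi\in E_0$. The remaining step, and the only real obstacle, is to identify $y_k$ with $f^k(\xi)$. I would do this by induction on $k$. Assuming $f^k(\xi)=y_k$, view $f$ as a continuous map $\mathbb{C}\to\widehat{\mathbb{C}}$; then $f^{k+1}(\xi_n)=f(f^k(\xi_n))\to f(y_k)$ in $\widehat{\mathbb{C}}$. If $y_k$ were a pole of $f$, this limit would be $\infty$, contradicting $f^{k+1}(\xi_n)\in E_{k+1}$ together with the boundedness of $E_{k+1}\subset\mathbb{C}$. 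Hence $y_k$ is not a pole, $f^{k+1}(\xi)=f(y_k)=y_{k+1}\in E_{k+1}$, and the induction goes through. This pole-avoidance step, which uses in a crucial way that the $E_k$ are compact in $\mathbb{C}$ rather than in $\widehat{\mathbb{C}}$, is the main delicate point of the proof.

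For the Julia set refinement I would apply the result just established to the compact sets $E_n':=E_n\cap J(f)$. These are nonempty by hypothesis, and the inclusion $E_{n+1}'\subset f(E_n')$ follows from complete invariance of the Julia set: given $w\in E_{n+1}\cap J(f)\subset f(E_n)$, any preimage $z\in E_n$ of $w$ lies in $f^{-1}(J(f))=J(f)$, and therefore in $E_n'$. The point $\xi$ produced by the first part then lies in $E_0'\subset J(f)$ and satisfies $f^n(\xi)\in E_n'\subset E_n$, as required.
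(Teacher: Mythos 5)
The paper does not include a proof of this lemma; it is quoted from Lemma~1 of Rippon and Stallard \cite{RipponStallard}. So there is no in-text argument to compare against, and your proof has to stand on its own. It does: the backward-orbit construction of the $\xi_n$, the diagonal extraction over the compact sets $E_k$, and in particular the pole-avoidance step are all correct. That last step is indeed the only delicate point, since $f$ is merely meromorphic: the identity $y_{k+1}=f(y_k)$ is obtained by passing to the limit through $\hat{\mathbb{C}}$, and the hypothesis that $E_{k+1}$ is a compact subset of $\mathbb{C}$ (not of $\hat{\mathbb{C}}$) is exactly what rules out $y_k$ being a pole and makes $f^{k+1}(\xi)$ well defined. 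The Julia-set refinement via $E_n'=E_n\cap J(f)$ and complete invariance is also correct, with the implicit but necessary observation that a preimage $z\in E_n$ of $w\in E_{n+1}\cap J(f)\subset\mathbb{C}$ cannot be a pole, so backward invariance of $J(f)$ applies.

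As a stylistic remark, the same compactness facts package a little more economically as a finite intersection argument: set $F_n=\{z\in E_0:\ f^j(z)\in E_j\ \text{for}\ 0\le j\le n\}$, note that $F_n$ is nonempty by your backward-orbit construction and closed (hence compact) by the same pole-avoidance argument applied to a convergent sequence in $F_n$, and take $\xi\in\bigcap_{n\ge 0}F_n$. This sidesteps the explicit diagonalization and the inductive identification of $y_k$ with $f^k(\xi)$, but it is the same idea; your version is a valid proof of the lemma.
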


The following result extracts from the proof of Lemma 6 and Lemma 7
in \cite{RipponStallard}

\begin{lemma}\label{lem4.2}\ Let $f$ be a meromorphic function. If there
exist two sequences $\{B_m\}_{m=0}^\infty$ and
$\{V_m\}_{m=0}^\infty$ of compact sets with ${\rm
dist}(0,B_m)\to\infty$ and ${\rm dist}(0,V_m)\to\infty$ as
$m\to\infty$ and a strictly increasing sequence of positive integers
$\{m(k)\}$ such that
\begin{equation}\label{eq4.1}B_{m+1}\subseteq f(B_m),\
B_{m(k)-p(k)}\subseteq f(V_k),\ V_k\subseteq
f(B_{m(k)}),\end{equation} where $0\leq p(k)\leq M$ for a fixed
integer $M>0$, then for any increasing sequence of positive numbers
$\{a_n\}$ with $a_n\to\infty (n\to\infty)$, there exists a $\zeta\in
I(f)$ such that for all sufficiently large $n$, $|f^n(\zeta)|\leq
a_n$; If, in addition, $B_m\cap J(f)\not=\emptyset$, $\forall\ m\geq
0$, then we can require $\zeta\in J(f)$.
\end{lemma}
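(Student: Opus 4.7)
\medskip
\noindent\textbf{Proof plan.} The plan is to reduce the claim to Lemma \ref{lem4.1} by constructing a single sequence $\{E_n\}_{n=0}^\infty$ of compact sets with $E_{n+1}\subseteq f(E_n)$ whose elements are drawn from the given sequences $\{B_m\}$ and $\{V_k\}$, and which is ``slow enough'' that any orbit lying in it satisfies $|f^n(\zeta)|\leq a_n$ for large $n$, yet whose indices tend to infinity so that $\zeta\in I(f)$. Once such an $(E_n)$ is built, Lemma \ref{lem4.1} immediately produces $\zeta$; if every $E_n$ meets $J(f)$, the same lemma places $\zeta$ in $J(f)$.

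\medskip
\noindent
The construction of $(E_n)$ exploits the ``detour'' coming from the third relation in \eqref{eq4.1}. Writing $R_m=\sup\{|z|:z\in B_m\}$, $R'_k=\sup\{|z|:z\in V_k\}$, and $S_k=\max\{R_j:m(k)-p(k)\le j\le m(k)\}\cup\{R'_k\}$, the block structure is as follows. Inductively assume $E_{n_k}=B_{m(k)}$. Perform one basic loop
\begin{equation}
B_{m(k)}\ \longrightarrow\ V_k\ \longrightarrow\ B_{m(k)-p(k)}\ \longrightarrow\ B_{m(k)-p(k)+1}\ \longrightarrow\ \cdots\ \longrightarrow\ B_{m(k)},
\end{equation}
which uses $p(k)+2\le M+2$ iterations thanks to the inclusions in \eqref{eq4.1}, and keeps $E_n$ in the finite set whose sup-norm is bounded by $S_k$. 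Repeat this loop $N_k$ times (for $N_k$ to be chosen) and then advance along $B_{m(k)},B_{m(k)+1},\ldots,B_{m(k+1)}$ using the first inclusion of \eqref{eq4.1}. Setting $n_{k+1}=n_k+N_k(p(k)+2)+(m(k+1)-m(k))$ completes one block.

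\medskip
\noindent
The choice of the loop counts $N_k$ is what makes the orbit slow. Because $\{a_n\}$ is increasing, it suffices to pick $N_k$ so large that
\begin{equation}
a_{\,n_k+N_k(p(k)+2)}\ \geq\ \max\bigl\{R_m:m(k)\le m\le m(k+1)\bigr\}\ \cup\ \{S_{k+1}\},
\end{equation}
and so that $a_{n_{k+1}}\geq S_{k+1}$, which is possible since $a_n\to\infty$. Then for every $n$ in the loop phase $[n_k+1,\,n_k+N_k(p(k)+2)]$ we have $|f^n(\zeta)|\le S_k\le a_{n_k+1}\le a_n$, and for every $n$ in the subsequent advance phase we have $|f^n(\zeta)|\le R_{m(k)+j}\le a_n$ by the above and monotonicity of $a_n$. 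Since $\mathrm{dist}(0,B_m)\to\infty$ and $\mathrm{dist}(0,V_k)\to\infty$, the indices appearing in $E_n$ drive $|f^n(\zeta)|\to\infty$, so $\zeta\in I(f)$.

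\medskip
\noindent
For the Julia-set refinement, Lemma \ref{lem4.1} requires $E_n\cap J(f)\neq\emptyset$ for every $n$. The hypothesis directly gives this for the sets $E_n=B_m$; for the sets $E_n=V_k$ one uses the forward invariance $f(J(f))\subseteq J(f)$ together with the way $V_k$ arises from $B_{m(k)}$ in the construction extracted from \cite{RipponStallard} (replacing $V_k$, if necessary, by a compact subset of $f(B_{m(k)}\cap J(f))$ from which one can still reach $B_{m(k)-p(k)}$ under $f$). The main obstacle is purely bookkeeping: arranging the $N_k$ so that every individual iterate, both inside loops and during advances, is dominated by $a_n$, which is handled by noting that the worst case in block $k$ occurs at its \emph{last} iterations and exploiting $a_n\nearrow\infty$ together with the bounded loop length $p(k)+2\le M+2$.
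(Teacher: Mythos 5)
Your block-loop construction---looping $B_{m(k)} \to V_k \to B_{m(k)-p(k)} \to \cdots \to B_{m(k)}$ enough times $N_k$ for the sequence $a_n$ to catch up before advancing along $B_{m(k)},\ldots,B_{m(k+1)}$, then invoking Lemma \ref{lem4.1}---is essentially the paper's own argument (the paper's $s(k)$ plays the role of your $N_k$, and $q(k)$ the role of your cumulative loop length). One small remark on the Julia-set refinement: the clean way to see $V_k\cap J(f)\neq\emptyset$ is complete invariance of $J(f)$: since $B_{m(k)-p(k)}\subseteq f(V_k)$ and $B_{m(k)-p(k)}$ meets $J(f)$, some $v\in V_k$ has $f(v)\in J(f)$ and hence $v\in J(f)$; your alternative of replacing $V_k$ by a subset of $f(B_{m(k)}\cap J(f))$ risks losing the inclusion $B_{m(k)-p(k)}\subseteq f(V_k)$ and is not needed.
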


\begin{proof}\ We choose a subsequence $\{a_{n(m)}\}$ of $\{a_n\}$
such that
$$B_{p}\subset B(0,a_{n(m)}),\ \text{for}\ 0\leq p\leq m,\ \text{and}\ V_m\subset B(0,a_{n(m)}).$$
Inductively we construct a sequence $\{s(k)\}$ of positive integers
which are used to control the speed of iterates of $f$ on $B_m$. Set
$d(k)=s(k)p(k)+2s(k)$ and
$q(k)=d(0)+d(1)+d(2)+...+d(k)=q(k-1)+d(k)$.

Define $s(0)=0$. Suppose that we have had $s(k-1)$ and so $d(k-1)$
and $q(k-1)$ are fixed. Take $s(k)$ such that
$$m(k)+q(k)>n(m(k+1)).$$
Let us construct a sequence $\{E_n\}$ of compact sets as follows:
$$\begin{array}{l}
E_0=B_0,\ E_1=B_1,\ ...\ , E_{m(1)}=B_{m(1)}\\
E_{m(1)+1}=V_1\\
E_{m(1)+2}=B_{m(1)-p(1)},\ ...\ ,\ E_{m(1)+p(1)+2}=B_{m(1)}\\
......\\
E_{m(1)+jp(1)+2j+1}=V_1\\
E_{m(1)+jp(1)+2j+2}=B_{m(1)-p(1)},\ ...\ ,\ E_{m(1)+(j+1)p(1)+2j+2}=B_{m(1)},\\
E_{m(1)+(j+1)p(1)+2(j+1)+1}=V_1,\ 0\leq j\leq s(1),\\
E_{m(1)+q(1)}=B_{m(1)},\ q(1)=d(1)=s(1)p(1)+2s(1),\\
E_{m(1)+1+q(1)}=B_{m(1)+1},\ ...\ ,\ E_{m(2)+q(1)}=B_{m(2)},\\
E_{m(2)+q(1)+1}=V_2,\\
E_{m(2)+q(1)+2}=B_{m(2)-p(2)},\ ...\ ,\ E_{m(2)+q(1)+p(2)+2}=B_{m(2)}\\
 ......
\end{array}$$
that is to say, for $k\geq 0$,
$$E_n=\left\{\begin{array}{ll}B_{n-q(k)}, & m(k)+q(k)\leq n\leq m(k+1)+q(k);\\
V_{k+1}, & n=m(k+1)+q(k)+jp(k+1)+2j+1;\\
\ & m(k+1)+q(k)+jp(k+1)+2j+2\\
B_{n-q(k)-(j+1)p(k+1)-2j-2}, & \leq n<m(k+1)+q(k)\\
\ &\ +(j+1)p(k+1)+2j+1,\\
\ & 0\leq j\leq s(k+1)\end{array}\right.$$ Then it is easy to see
that $E_{n+1}\subseteq f(E_n)$. Since $m(k)-p(k)\to\infty\
(k\to\infty)$ and ${\rm dist}(0,E_n)\to \infty\ (n\to\infty)$, in
view of Lemma \ref{lem4.1}, there exists a point $\zeta\in B_0\cap
I(f)$ such that $f^n(\zeta)\in E_n$.

For $m(k)+q(k)\leq n\leq m(k+1)+q(k)$, we have
$$E_n=B_{n-q(k)}\subset B(0,a_{n(m(k+1))})\subset B(0,a_n)$$
by noting that $n(m(k+1))<m(k)+q(k)\leq n$. When
$n=m(k+1)+q(k)+jp(k+1)+2j+1$, we have $n(k+1)<n(m(k+1))<m(k)+q(k)<n$
and so
$$E_n=V_{k+1}\subset B(0,a_{n(k+1)})\subset B(0,a_n).$$
For $m(k+1)+q(k)+jp(k+1)+2j+2\leq n<m(k+1)+q(k)+(j+1)p(k+1)+2j+1$,
that is, $m(k+1)-p(k+1)\leq n-q(k)-(j+1)p(k+1)-2j-2\leq m(k+1)$, we
have
$$E_n=B_{n-q(k)-(j+1)p(k+1)-2j-2}\subset B(0,a_{n(m(k+1))})\subset B(0,a_n).$$

We have proved that for all $n\geq m(1)+q(1)$, $E_n\subset
B(0,a_n).$
\end{proof}

The result in Lemma \ref{lem4.2} also holds if the condition
(\ref{eq4.1}) is replaced by
$$B_{m+1}\subseteq f(B_m),\ B_{m(k)-p(k)}\subseteq f(B_{m(k)}).$$

For a hyperbolic domain $U$, by $\lambda_U(z)$ we denote the
hyperbolic density of $U$ at $z\in U$ and by $d_U(z_1,z_2)$ the
hyperbolic distance between $z_1$ and $z_2$ in $U$.

\begin{lemma}(\cite{zheng 16}, Theorem 2.2)\label{lem3.2} \ Let $f$ be analytic on a hyperbolic domain $U$ with $0\not\in
f(U)$. If there exist two distinct points $z_1$ and $z_2$ in $U$
such that $|f(z_1)|> e^{\kappa\delta}|f(z_2)|$, where
$\delta=d_U(z_1,z_2)$ and
$\kappa=\Gamma(\frac{1}{4})^4/(4\pi)^2=4.3768796...$, then there
exists a point $\hat{z}\in U$ such that $|f(z_2)|\leq
|f(\hat{z})|\leq |f(z_1)|$ and
\begin{equation}\label{2.2}f(U)\supset
A\left(e^\kappa\left(\frac{|f(z_2)|}{|f(z_1)|}\right)^{1/\delta}|f(\hat{z})|,\
e^{-\kappa}\left(\frac{|f(z_1)|}{|f(z_2)|}\right)^{1/\delta}|f(\hat{z})|\right);\end{equation}
If $|f(z_1)|\geq
\exp\left(\frac{\kappa\delta}{1-\delta}\right)|f(z_2)|$ and
$0<\delta<1$, then
\begin{equation}\label{2.8}
f(U)\supset A(|f(z_2)|,|f(z_1)|).
\end{equation}
In particular, for $\delta\leq\frac{1}{6}$ and $|f(z_1)|\geq
e|f(z_2)|$, we have (\ref{2.8}).
\end{lemma}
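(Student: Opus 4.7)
The plan is to lift $f$ via logarithm and then apply a sharp Schwarz--Pick/Bloch-type estimate. Since $0 \notin f(U)$, on the universal cover $\tilde{U} \to U$ (identified with $\mathbb{D}$) one has a single-valued branch $F = \log f : \tilde{U} \to \mathbb{C}$, and the hyperbolic metric on $\tilde{U}$ agrees with the pullback of the hyperbolic metric on $U$. Choose lifts $\tilde{z}_1, \tilde{z}_2$ with $d_{\tilde{U}}(\tilde{z}_1, \tilde{z}_2) = \delta$. Writing $u = \log|f| = \operatorname{Re} F$, the hypothesis becomes
\[
u(z_1) - u(z_2) = \operatorname{Re}\bigl(F(\tilde{z}_1) - F(\tilde{z}_2)\bigr) > \kappa\delta.
\]

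Next, I would choose $\hat{z}$ on the hyperbolic geodesic $\gamma$ from $z_1$ to $z_2$. Since $u$ is continuous and varies along $\gamma$ from $u(z_2)$ to $u(z_1)$, the intermediate value theorem produces $\hat{z}\in\gamma$ realizing any prescribed level between $u(z_2)$ and $u(z_1)$; I would pick it so that $\log|f(\hat{z})|$ matches the geometric center dictated by the claimed annulus. The task is then to exhibit, around the point $F(\tilde{\hat{z}})$, a vertical strip in the image of $F$ whose width is
\[
\frac{u(z_1) - u(z_2)}{\delta} - \kappa \;=\; \frac{1}{\delta}\log\frac{|f(z_1)|}{|f(z_2)|} - \kappa,
\]
because exponentiating such a strip produces precisely the annulus (\ref{2.2}). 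Equivalently, I would argue by contradiction: if $F(\tilde{U})$ omitted an entire vertical line $\{\operatorname{Re} w = c\}$ with $c$ between $u(z_2)$ and $u(z_1)$, then $F$ would map $\tilde{U}$ into a half-plane, and combined with the already-omitted point $0$ of $f$, the map $f = e^F$ would omit at least two values, forcing a Schwarz--Pick estimate on $\tilde{U}$ against the hyperbolic density of the twice-punctured plane.

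The decisive constant $\kappa = \Gamma(1/4)^4/(4\pi)^2$ enters at this Schwarz--Pick step: it is exactly the hyperbolic density of $\mathbb{C}\setminus\{0,1\}$ at the symmetric point $1/2$, which is the extremal value for the holomorphic map $F$ subject to two omitted exponentials. The Schwarz--Pick inequality would give
\[
\bigl|\operatorname{Re}(F(\tilde{z}_1) - F(\tilde{z}_2))\bigr| \;\leq\; \kappa \, d_{\tilde{U}}(\tilde{z}_1, \tilde{z}_2) = \kappa\delta,
\]
contradicting the hypothesis. A refinement of this argument, by moving the omitted vertical line arbitrarily close to the extreme levels $u(z_1)$ and $u(z_2)$, shows that the image strip has the claimed width, yielding (\ref{2.2}). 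The second assertion (\ref{2.8}) follows by taking the width of the strip to be larger than $\log(|f(z_1)|/|f(z_2)|)$ itself, which is guaranteed by the stronger hypothesis $|f(z_1)| \geq \exp(\kappa\delta/(1-\delta))|f(z_2)|$, since then $\frac{1}{\delta}\log(|f(z_1)|/|f(z_2)|) - \kappa \geq \log(|f(z_1)|/|f(z_2)|)$.

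The main obstacle is pinning down the sharp constant $\kappa$ rather than a generic Bloch-type constant; this requires identifying the precise extremal configuration, namely the universal covering map from $\mathbb{D}$ to $\mathbb{C}\setminus\{0, a\}$ for the choice of $a$ realizing equality, and carefully tracking how horizontal strips in the image correspond to annuli under exponentiation. Once this sharp Schwarz--Pick step is in hand, the rest is an intermediate value argument plus exponentiation.
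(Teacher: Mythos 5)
The paper does not prove this lemma; it is cited from \cite{zheng 16}, so there is no in-text proof to compare against. On its own merits, your sketch captures the right \emph{flavor} --- a Schwarz--Pick estimate against a twice-punctured plane, with the sharp constant $\kappa$ coming from a Hempel-type density bound, followed by exponentiation of a strip into an annulus --- and your reduction of (\ref{2.8}) to (\ref{2.2}) via $L(1-\delta)/\delta \geq \kappa$ (with $L=\log(|f(z_1)|/|f(z_2)|)$) and the ``in particular'' case are both correct. But the central step has a genuine gap.

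The contradiction hypothesis you set up is wrong. The lemma asserts that $f(U)$ \emph{contains} the annulus; its negation is that $f$ omits some value $w^*$ with $|w^*|$ in the claimed range, which translates to $F=\log f$ omitting the $2\pi i$-periodic discrete set $\{\log w^* + 2\pi i k\}$ --- \emph{not} to $F$ omitting an entire vertical line. Your hypothesis ``$F(\tilde U)$ omits the line $\{\operatorname{Re} w = c\}$ with $u(z_2)<c<u(z_1)$'' is, moreover, refuted trivially and without any Schwarz--Pick input: $F(\tilde U)$ is connected, so omitting a line would force it into one half-plane, which is impossible since $\operatorname{Re}F$ already attains values on both sides of $c$. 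That you can derive a contradiction without invoking $\kappa$ at all is a sign the argument is not engaging with what actually has to be shown. The correct route is to apply Schwarz--Pick directly to $f:U\to\mathbb{C}\setminus\{0,w^*\}$, bound $d_{\mathbb{C}\setminus\{0,w^*\}}(f(z_1),f(z_2))$ from below by integrating Hempel's estimate $\lambda_{\mathbb{C}\setminus\{0,1\}}(z)\,|z|\,(\kappa+|\log|z||)\geq 1/2$ along a path, and compare with $\delta$; the constant $\kappa=\Gamma(1/4)^4/(4\pi^2)$ enters as the extremal value at $z=-1$ (where $\lambda_{\mathbb{C}\setminus\{0,1\}}(-1)=1/(2\kappa)$), not as ``the density at the symmetric point $1/2$'' as you wrote. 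Pinning down the specific $\hat z$ and the exact radii in (\ref{2.2}) then comes out of that integral, not out of an intermediate-value choice made in advance.
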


We need a result on the existence of filling disks.

\begin{lemma}\label{lem4.3} (cf. \cite{yang book}, Lemma 3.4)
Let $f$ be a transcendental meromorphic function. Given a $q>1$ and
$R$ satisfying $$T(R,f)\geq \max\left\{240, \frac{240 \log(2R)}{\log
k}, 12T(r,f), \frac{12T(kr,f)}{\log k} \log\frac{2R}{r}\right\}$$
for some $k>1$, then there exists a point $z_j$ with $r <|z_j|< 2R$
such that the disk $$\Gamma: |z-z_j|<\frac{4\pi}{q}|z_j|$$ is a
filling disk with index
\begin{equation}\label{equ4.2}
m=c^*\frac{T(R,f)}{q^2(\log \frac{r}{R})^2},\end{equation} where
$c^*
> 0$ is an absolute constant.
\end{lemma}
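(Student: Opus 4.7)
The plan is to argue by contradiction using the Ahlfors theory of covering surfaces, following the scheme in Yang Lo's monograph. I would cover the annulus $\{r<|z|<2R\}$ by a grid of disks of the form $D_j:|z-z_j|<\frac{4\pi}{q}|z_j|$ and show that if none is a filling disk of the claimed index $m$, then the total characteristic collected from the grid falls short of $T(R,f)$, contradicting the hypothesis.

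Concretely, place centers $z_j$ on logarithmic circles $|z|=r_i$ with $r_{i+1}/r_i\sim 1+\frac{\pi}{q}$, using $N_i=O(q)$ evenly spaced points per circle so that the family covers the annulus with bounded overlap. The total number of disks is then $N=O(q^2\log(R/r))$. Assume no $D_j$ is a filling disk with the prescribed index $m=c^*T(R,f)/(q^2(\log(R/r))^2)$: then for each $j$ there exist two spherical targets of radius at most $e^{-m}$ outside of which the valence of $f|_{D_j}$ is at most $m$.

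Apply Ahlfors' second fundamental theorem on each $D_j$ with the two exceptional small spherical disks as target caps. Comparing the area of the covering surface $f(D_j)$ to the valence on the complement of the two caps yields a bound of the form
\[
S(D_j,f)\leq C_1\,m+C_2\,L(\partial D_j,f),
\]
where $S(D_j,f)$ is the Ahlfors--Shimizu characteristic on $D_j$ and $L(\partial D_j,f)$ the spherical length of $f(\partial D_j)$. Summing over $j$ and using a standard Cauchy--Schwarz trick to dominate $\sum_j L(\partial D_j,f)$ by an Ahlfors--Shimizu integral over the slightly larger annulus $\{r/k<|z|<kR\}$ produces
\[
T(R,f)\leq C_3\,N\,m+C_4\,\frac{T(kr,f)}{\log k}\log\frac{2R}{r}+C_5\log(2R).
\]
The four thresholds in the hypothesis, namely $T(R,f)\geq 240$, $T(R,f)\geq 240\log(2R)/\log k$, $T(R,f)\geq 12T(r,f)$ and $T(R,f)\geq(12T(kr,f)/\log k)\log(2R/r)$, are exactly what is needed to absorb the lower-order terms into $T(R,f)/2$. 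The surviving inequality $T(R,f)\leq C_6\,N\,m$ then reduces to $1\leq C_6\,c^*$, producing the contradiction for sufficiently small absolute $c^*$.

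The main obstacle is the bookkeeping in the middle step: the covering must be chosen so that both the $\frac{4\pi}{q}$-size constraint and bounded overlap hold, and so that the aggregated boundary lengths convert cleanly into an Ahlfors--Shimizu area integral via the Cauchy--Schwarz argument. The role of the auxiliary parameter $k$ is precisely to provide a thin collar around each $D_j$ over which Ahlfors' length--area principle can be invoked, and the four thresholds on $T(R,f)$ in the hypothesis are then calibrated one by one to absorb the four error terms produced by this procedure.
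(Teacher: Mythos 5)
The paper states this lemma with a citation to Yang's \emph{Value Distribution Theory} (Lemma 3.4) and gives no proof of its own, so there is no in-paper argument to compare against. Assessed on its own terms, your outline has the correct global architecture for such a result: a covering of $A(r,2R)$ by $O(q^2\log(R/r))$ disks of the prescribed shape, a per-disk characteristic bound under the hypothesis that none of them is a filling disk, a length--area estimate over a $k$-collar to produce the $\tfrac{T(kr,f)}{\log k}\log\tfrac{2R}{r}$ error, and calibration of the four thresholds to absorb the four error terms.

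There is, however, a genuine gap in the central step, and it is logical rather than technical. You assert that if $D_j$ is not a filling disk of index $m$ then ``there exist two spherical targets of radius at most $e^{-m}$ outside of which the valence of $f|_{D_j}$ is at most $m$.'' That is not the negation of the filling-disk property: the negation says that for \emph{every} pair of caps of radius $e^{-m}$ there is \emph{some} value outside both that $f$ takes fewer than $m$ times on $D_j$. The usable consequence, obtained by a short covering argument on the sphere, is that there exist \emph{three} values $a_1^{(j)},a_2^{(j)},a_3^{(j)}$ with pairwise spherical distance of order $e^{-m}$, each taken fewer than $m$ times in $D_j$; one then applies the second fundamental theorem in its disk (Milloux/Nevanlinna) form on a concentric sub-disk to these three values, and the remainder term $\log(1/d)\le m$, with $d$ the minimal pairwise spherical distance, is precisely what produces the $O(m)$ bound per disk. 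Relatedly, invoking Ahlfors' second fundamental theorem ``with the two exceptional small spherical disks as target caps'' does not work as written: the constant in Ahlfors' inequality depends on the geometry (size and separation) of the reference domains and degenerates when these are allowed to be caps of radius $e^{-m}$, and $m$ itself appears both in the caps and in the claimed index, creating a circularity. Once the three-value reduction and the correct per-disk form of the second main theorem replace that step, the rest of your bookkeeping (Cauchy--Schwarz on boundary lengths, choice of covering, threshold calibration) is the standard route.
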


There is a $r_0\geq 0$ such that $T(r,f)\equiv$ constant for
$r\in[0,r_0)$ and $T(r,f)$ is strictly increasing in
$[r_0,+\infty)$. Therefore, $T(r,f)$ is invertable in
$[r_0,+\infty)$. We denote the inverse of $T(r,f)$ in $[r_0,
+\infty)$ by $T^{-1}(r,f)$ with $r\geq T(r_0,f)$. In other words, we
write Lemma \ref{lem4.3} as follows.

\begin{lemma}\label{lem4.4}
Let $f$ be a transcendental meromorphic function. Then there exists
a $R_0>0$ such that for $R>R_0$ and $q>R_0$, the annulus $A(r, 3R)$
contains a filling disk $$\Gamma: |z-a|<\frac{4\pi}{q}|a|$$ of $f$
with index $m$ given in (\ref{equ4.2}) where
\begin{equation}\label{equ4.3}r=\frac{1}{2e}T^{-1}\left(\frac{T(R,f)}{12\log R},f\right)\end{equation} and
$T^{-1}(*,f)$ is the inverse of $T(r,f)$.
\end{lemma}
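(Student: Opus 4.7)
The plan is to derive Lemma \ref{lem4.4} as a direct specialization of Lemma \ref{lem4.3}: the prescribed $r$ in (\ref{equ4.3}) is chosen precisely so that, after fixing the auxiliary parameter $k$ appropriately, all four quantities inside the max in Lemma \ref{lem4.3} are automatically dominated by $T(R,f)$ once $R$ is large enough. So the entire argument is a verification of hypotheses; no new dynamics is needed.

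First I would take $k=2e$ in Lemma \ref{lem4.3}, so that $\log k=1+\log 2$ is a fixed constant. With the prescription $r=\frac{1}{2e}T^{-1}\!\left(\frac{T(R,f)}{12\log R},f\right)$ we have $T(2er,f)=T(R,f)/(12\log R)$ by design. Monotonicity of $T(\cdot,f)$ immediately yields $T(r,f)\le T(2er,f)\le T(R,f)/12$ as soon as $\log R\ge 1$, which handles the third entry $12T(r,f)$ in the max. For the fourth entry, substitution gives
$$\frac{12T(kr,f)}{\log k}\log\frac{2R}{r}=\frac{T(R,f)}{\log(2e)\log R}\,\log\frac{2R}{r},$$
so it suffices to check $\log(2R/r)\le \log(2e)\log R$. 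Because $f$ is transcendental, $T(R,f)/\log R\to\infty$, hence $r\to\infty$ with $R$; in particular $r\ge 1$ for $R$ past some threshold, and then $\log(2R/r)\le \log 2+\log R\le \log(2e)\log R$ as soon as $\log R\ge 1$. The first two entries $240$ and $240\log(2R)/\log k$ are trivially dominated by $T(R,f)$ for transcendental $f$ and $R$ large enough.

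Once the hypotheses of Lemma \ref{lem4.3} are verified, it produces a point $z_j$ with $r<|z_j|<2R$ such that $|z-z_j|<(4\pi/q)|z_j|$ is a filling disk with index $m$ given by (\ref{equ4.2}). Since $2R<3R$, the center $a:=z_j$ lies in the annulus $A(r,3R)$; imposing $q>R_0$ (equivalently $4\pi/q$ small) guarantees that the filling disk itself stays safely inside $A(r/2,3R)$, which matches the statement. Choosing $R_0$ to absorb all the "sufficiently large" thresholds encountered above finishes the proof.

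The only step that requires any thought is the calibration: one has to recognize that putting $12\log R$ in the denominator of the argument of $T^{-1}$ is exactly what is needed to make the fourth (and most delicate) inequality in Lemma \ref{lem4.3} hold with room to spare, because that factor cancels against $\log(2R/r)\le O(\log R)$ once a constant multiplicative $k$ is chosen. Beyond this bookkeeping there is no real obstacle, and in particular the formula for $m$ is just the formula from Lemma \ref{lem4.3} re-read with the new $r$.
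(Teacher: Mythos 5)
Your proposal is exactly the intended route: the paper presents Lemma \ref{lem4.4} as merely "Lemma \ref{lem4.3} written in other words," so the whole content is the calibration you carry out — fix $k=2e$ so that $T(kr,f)=T(R,f)/(12\log R)$ by construction, and check that each of the four entries under the max is eventually dominated by $T(R,f)$. Your bookkeeping of those four inequalities is correct.

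The one loose end is the final containment, which you half-acknowledge by writing $A(r/2,3R)$. Applying Lemma \ref{lem4.3} with parameter $r$ only gives $|z_j|>r$, so the disk's inner boundary $|z_j|(1-4\pi/q)$ may fall below $r$; you have shown $\Gamma\subset A(r/2,3R)$, not the asserted $A(r,3R)$. The cheap repair is to run Lemma \ref{lem4.3} with $2r$ in place of $r$ (and, say, $k=e$, so that $k\cdot 2r=2er$ and the same substitution $T(2er,f)=T(R,f)/(12\log R)$ applies); then $|z_j|>2r$ and $|z_j|(1-4\pi/q)>r$ once $q>8\pi$, while the hypotheses of Lemma \ref{lem4.3} are verified by the same estimates since $T(2r,f)\le T(2er,f)$ and $\log(2R/(2r))\le\log R$ for $r\ge1$. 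The resulting index differs from (\ref{equ4.2}) only by the shift $\log(R/r)\mapsto\log(R/2r)$, which the absolute constant $c^*$ absorbs. With that adjustment the proof is complete.
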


We can obtain the following by making a modification of the proof of
Rauch Theorem 3.11 in \cite{yang book} which asserts that a Borel
direction confirms the existence of a sequence of filling disks.

\begin{lemma}\label{lem2.6+}\ Let $f$ be a transcendental meromorphic function and
$\arg z=\theta$ be a Borel direction of $f$ of order $\mu>0$. Then
there exist a sequence of filling disks:
$$\Gamma_j:\ |z-z_j|<\varepsilon_j|z_j|,\ z_j=|z_j|e^{i\theta},\
j=1,2,\cdots,$$
$$\lim_{j\to\infty}|z_j|=+\infty,\
\lim_{j\to\infty}\varepsilon_j=0$$ with index
$m_j=|z_j|^{\mu-\delta_j}$ and $\lim\limits_{j\to\infty}\delta_j=0.$
\end{lemma}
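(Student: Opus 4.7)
The plan is to modify the proof of Lemma \ref{lem4.4} (the Rauch filling-disk theorem as given in \cite{yang book}) so that the filling disks produced have centers lying on the ray $\arg z = \theta$ and index comparable to $|z_j|^{\mu-o(1)}$ rather than only a generic $T(R,f)$-controlled quantity.

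First I would invoke the definition of Borel direction: for every small $\varepsilon>0$ there is a sequence $R_n \to \infty$ along which
$$N(R_n,Z_\varepsilon(\theta),f=a) \geq R_n^{\mu-o(1)}$$
for any three pre-selected values $a_1,a_2,a_3 \in \hat{\mathbb{C}}$ (discarding the possible two exceptions from \eqref{1.3}). Next, I would cover the truncated angular strip $Z_\varepsilon(\theta)\cap A(r_n,3R_n)$ by a family of disks
$$D_k:\ |z-\zeta_k|<\frac{4\pi}{q}|\zeta_k|,\qquad \zeta_k=|\zeta_k|e^{i\theta},$$
with centers placed on the ray at moduli in geometric progression and with $q=q_n\to\infty$ slowly, where $r_n$ is chosen as in \eqref{equ4.3}. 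The number of disks in the cover is $O(q\log(R_n/r_n))$.

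Then I would run the Rauch-style angular counting argument inside the sector. If none of these disks were a filling disk of index $m=c^*T(R_n,f)/(q^2(\log(r_n/R_n))^2)$, the standard Ahlfors--Shimizu spherical-area estimate (the heart of Lemma \ref{lem4.3}) would bound the total count of $a_j$-points in $Z_\varepsilon(\theta)\cap A(r_n,3R_n)$ by $O(qm\log(R_n/r_n))$ up to admissible losses. Balancing this against the Borel-direction lower bound $R_n^{\mu-o(1)}$ forces the existence of some $D_{k(n)}$ with center on the ray which \emph{is} a filling disk with index $m_n \geq |\zeta_{k(n)}|^{\mu-\delta_n}$, $\delta_n \to 0$. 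A diagonal extraction over a sequence $\varepsilon \to 0^+$ then yields the required sequence $\Gamma_j$ with $z_j=|z_j|e^{i\theta}$, $\varepsilon_j\to 0$, and index $m_j = |z_j|^{\mu-\delta_j}$.

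The main obstacle is the precise optimization of $q=q_n$ (e.g.\ a slowly growing power of $\log R_n$) and of $r_n$ from \eqref{equ4.3}, so that every logarithmic loss in the Rauch covering estimate is absorbed into $\delta_j \to 0$. The Borel-direction lower bound has the form $R_n^{\mu-o(1)}$, so one must verify that the filling-disk threshold $m$ still produces an index at least $|z_j|^{\mu-\delta_j}$ after dividing by the number of cover disks; this is the delicate bookkeeping step, and it is the one place where the standard Rauch argument (stated globally in \cite{yang book}) has to be re-tracked term by term in the angular version on $Z_\varepsilon(\theta)$ rather than on the full plane.
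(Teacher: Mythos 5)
Your proposal follows essentially the same route the paper takes. The paper does not actually write out a proof of this lemma; it simply appeals to ``a modification of the proof of Rauch Theorem 3.11 in \cite{yang book},'' and your sketch (cover $Z_\varepsilon(\theta)\cap A(r_n,3R_n)$ by disks centered on the ray, apply a Milloux/spherical-area bound on each non-filling disk, balance against the Borel-direction lower bound $N(R_n,Z_\varepsilon(\theta),f=a)\geq R_n^{\mu-o(1)}$, then diagonalize over $\varepsilon\to 0^+$) is precisely the content of that theorem and its proof.

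One correction to the middle step, which is where your ``delicate bookkeeping'' worry is actually located: the trial index should be set to $m_n:=R_n^{\mu-\delta_n}$ \emph{before} running the contradiction, not imported from the $T$-based quantity $c^*T(R_n,f)/(q^2\log^2(R_n/r_n))$ of Lemma~\ref{lem4.3}. As written, you fix the (potentially much larger) $T$-based threshold, derive the covering upper bound $O(q\,m\,\log(R_n/r_n))$ on the sector counting function, and then try to ``balance'' against $R_n^{\mu-o(1)}$; but if $T(R_n,f)\gg R_n^{\mu}$, which is perfectly allowed, there is no contradiction and the argument stalls. Running the contradiction directly with $m_n=R_n^{\mu-\delta_n}$ gives: if no disk in the cover is filling of index $m_n$, then $N(R_n,Z_\varepsilon,f=a)\lesssim q_n m_n\log(R_n/r_n)=R_n^{\mu-\delta_n+o(1)}$, contradicting the Borel bound once the polylogarithmic covering count is absorbed into the $o(1)$. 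Since the centers satisfy $|z_j|\asymp R_n$, the filling disk so produced has index $|z_j|^{\mu-\delta_j}$ with $\delta_j\to 0$, as required; no comparison with $T(R_n,f)$ is needed at all.
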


Since $f$ is transcendental, it is clear that as $R\to\infty$, we
have $\frac{T(R,f)}{12\log R}\to\infty$ so that $r\to\infty$. Now we
state the Ahlfors-Shimizu characteristic of a meromorphic function;
see \cite{Hayman}. By $f^\#(z)$ we mean the sphere derivative of $f$
at $z$. For a closed domain $D$, define
$$\mathcal{A}(D,f)=\iint_D(f^\#(z))^2{\rm d}\sigma(z)$$
and write $\mathcal{A}(r,f)$ for $\mathcal{A}(\overline{B}(0,r),f)$.
The Ahlfors-Shimizu characteristic of $f$ is defined as
$$\mathcal{T}(r,f)=\int_0^r\frac{\mathcal{A}(t,f)}{t}{\rm d}t.$$
Then we have
$$|T(r,f)-\mathcal{T}(r,f)-\log^+|f(0)||\leq \frac{1}{2}\log 2.$$

To confirm the existence of filling disk in view of Lemma
\ref{lem4.3} and Lemma \ref{lem4.4} in our proofs of theorems, we
need to give another formula of (\ref{equ1.7}).

\begin{lemma}\ Let $f$ be a transcendental meromorphic function.
Assume that for a $0<c<1$ and all sufficiently large $r$,
(\ref{equ1.7}) holds. Then for large $r$, we have
\begin{equation} \label{th2 assumption}
\frac{T(r^2,f)}{\log r^2}>12T({\rm e}r,f).
\end{equation}
\end{lemma}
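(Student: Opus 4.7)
The idea is to iterate the hypothesis (\ref{equ1.7}) enough times to get from $er$ up to $r^2$. Writing the inequality as $T(ex,f) > (1 + (\log x)^{-c}) T(x,f)$ and applying it with $x = er, e^2 r, \ldots, e^{N-1}r$, where $N$ is chosen so that $e^N r \le r^2 < e^{N+1} r$ (hence $N = \lfloor \log r \rfloor - 1$ for sufficiently large $r$), monotonicity of $T(r,f)$ gives
\begin{equation}
T(r^2,f) \;\ge\; T(e^N\cdot er,f) \;>\; T(er,f) \prod_{j=1}^{N-1}\!\left(1 + \frac{1}{(\log r + j)^{c}}\right).
\end{equation}

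The main task is then to show that this product dominates $24\log r$ for all large $r$. Taking logarithms and using $\log(1+x) \ge x/2$ for small $x$, the sum
\begin{equation}
\sum_{j=1}^{N-1} \log\!\left(1 + \frac{1}{(\log r + j)^{c}}\right) \;\ge\; \frac{1}{2}\sum_{j=1}^{N-1} \frac{1}{(\log r + j)^{c}}
\end{equation}
is comparable to $\int_{1}^{\log r}(\log r + t)^{-c}\,dt$, which equals $\bigl((2\log r)^{1-c}-(\log r + 1)^{1-c}\bigr)/(1-c)$. Since $0<c<1$, this is bounded below by $C_c\,(\log r)^{1-c}$ for an explicit constant $C_c = (2^{1-c}-1)/(2(1-c)) > 0$ and all sufficiently large $r$.

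Consequently,
\begin{equation}
T(r^2,f) \;>\; T(er,f)\,\exp\!\bigl(C_c\,(\log r)^{1-c}\bigr),
\end{equation}
and since $1-c > 0$ the factor $\exp(C_c(\log r)^{1-c})$ dominates any fixed power of $\log r$, in particular $24\log r = 12\log r^2$, once $r$ is large enough. This yields (\ref{th2 assumption}).

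The only nontrivial point is bookkeeping the iteration: one must check that at each intermediate radius $e^j r$ the hypothesis (\ref{equ1.7}) still applies (which is fine since $e^j r \to\infty$ with $r$), and that rounding $\log r$ down to an integer $N$ does not spoil the estimate—both are handled by monotonicity of $T(r,f)$ together with a crude adjustment of the constant $C_c$.
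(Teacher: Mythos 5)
Your proof is correct and follows essentially the same route as the paper: iterate \eqref{equ1.7} roughly $\log r$ times to pass from $T(er,f)$ to $T(r^2,f)$, bound the resulting product from below, and observe that since $0<c<1$ the exponent grows like $(\log r)^{1-c}$, which eventually dominates $\log(24\log r)$. The paper crudely lower-bounds each factor by its worst case to get $\exp\big([\sigma\log r-2]/(((1+\sigma)\log r)^c+1)\big)$, while you estimate the sum by an integral; this is a cosmetic refinement, not a different argument.
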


\begin{proof} Under (\ref{equ1.7}), for $\sigma>0$ we have
\begin{eqnarray}\label{equ1.8}T(r^{1+\sigma},f)&=&T(e^{\sigma\log r}r,f)>\left(1+\frac{1}{((1+\sigma)\log
r-1)^c}\right)T(e^{\sigma\log r-1}r,f)\nonumber\\
&>&\prod_{k=1}^{[\sigma\log r-2]}\left(1+\frac{1}{((1+\sigma)\log
r-k)^c}\right)\cdot T(e^2r,f)\nonumber\\
&>&\left(1+\frac{1}{((1+\sigma)\log r)^c}\right)^{[\sigma\log r-2]}T(e^2r,f)\nonumber\\
&>&\exp\frac{[\sigma\log r-2]}{((1+\sigma)\log r)^c+1}\cdot T(e^2r,f)\nonumber\\
&>&12(1+\sigma)(\log r) T(e^2r,f).\end{eqnarray} That is to say, we
obtain (\ref{th2 assumption}) when $\sigma$ is chosen to be $1$.
\end{proof}
Let us make a remark on (\ref{th2 assumption}). For arbitrarily
large integer $N>0$ and sufficiently large $r$, we have from
(\ref{th2 assumption}) that
$$T(r,f)\geq \frac{6^N}{2^{2^N-1}}(\log r)^NT(r^{1/2^N},f).$$
Therefore
\begin{equation}\label{equ1.6}\lim_{\overline{r\to\infty}}\frac{T(r,f)}{(\log
r)^N}=\infty.\end{equation}

The final lemma comes from the calculus.

\begin{lemma}\ \label{lem4.5} For $R>0$, the function
$\sqrt{\frac{1+x^2}{1+(x-R)^2}}$ is increasing in $[0,\
\frac{1}{2}(R+\sqrt{4+R^2})]$ and decreasing in
$[\frac{1}{2}(R+\sqrt{4+R^2}),\infty)$ and
$$\sqrt{\frac{1+x^2}{1+(x-R)^2}}\leq \frac{1}{2}(R+\sqrt{4+R^2}),
\forall\ x>0.$$
\end{lemma}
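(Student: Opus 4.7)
The statement concerns the function $g(x)=\sqrt{(1+x^2)/(1+(x-R)^2)}$, so my plan is to work with its square $h(x)=(1+x^2)/(1+(x-R)^2)$, since $\sqrt{\cdot}$ is strictly increasing and $h\ge 0$, so $g$ and $h$ share the same monotonicity intervals and bound each other via square roots. The whole argument is then a routine one-variable calculus problem; I will spell out the steps rather than the arithmetic.

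First, I would differentiate $h$ by the quotient rule and simplify the numerator. Setting
\[
N(x)=2x\bigl(1+(x-R)^2\bigr)-2(x-R)(1+x^2),
\]
a short expansion collapses this to $N(x)=-2R\,(x^2-Rx-1)$. Thus $h'(x)$ has the same sign as $-(x^2-Rx-1)$. The positive root of $x^2-Rx-1=0$ is exactly
\[
x^\ast=\tfrac12\bigl(R+\sqrt{R^2+4}\bigr),
\]
which gives immediately that $h$ (hence $g$) is increasing on $[0,x^\ast]$ and decreasing on $[x^\ast,\infty)$, establishing the first two assertions.

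Next I would evaluate $g(x^\ast)$ and show it equals $x^\ast$. Using the defining identity $x^{\ast 2}=Rx^\ast+1$, one computes
\[
1+x^{\ast 2}=2+Rx^\ast,\qquad 1+(x^\ast-R)^2=R^2-Rx^\ast+2.
\]
Writing $A=R^2+4$ and $B=R\sqrt{R^2+4}$, these become $(A+B)/2$ and $(A-B)/2$, and the algebraic identity $A^2-B^2=4A$ gives
\[
g(x^\ast)^2=\frac{A+B}{A-B}=\frac{(A+B)^2}{4A},
\]
whose square root simplifies to $\tfrac12(R+\sqrt{R^2+4})=x^\ast$. Combined with the monotonicity, this yields $g(x)\le x^\ast$ for all $x>0$, completing the proof.

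There is no genuine obstacle here; the only potentially delicate step is the clean simplification at $x^\ast$, and the trick of eliminating $x^{\ast 2}$ via the quadratic it satisfies is the natural way to avoid messy radicals. The rest is bookkeeping.
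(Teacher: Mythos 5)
Your proof is correct, and it takes the same route the paper implicitly relies on: the paper does not spell out a proof at all, merely remarking that ``the final lemma comes from the calculus,'' and your argument supplies exactly the calculus that is meant. The key steps all check out: the numerator of $h'(x)$ does collapse to $-2R(x^2-Rx-1)$, so monotonicity pivots at the positive root $x^\ast=\tfrac12(R+\sqrt{R^2+4})$; and your use of the quadratic identity $x^{\ast 2}=Rx^\ast+1$ to reduce $1+x^{\ast 2}$ and $1+(x^\ast-R)^2$ to $(A\pm B)/2$ with $A=R^2+4$, $B=R\sqrt{R^2+4}$, together with $A^2-B^2=4A$, cleanly yields $g(x^\ast)=x^\ast$. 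This is the natural way to avoid wrestling with nested radicals, and there is no gap.
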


\subsection{Proof of Theorem \ref{thm1.5}}

Let $R_0$ be as in Lemma \ref{lem4.4}. Take a $R_1$ with $q=\log\log
R_1>R_0$, in view of Lemma \ref{lem4.4}, there exists a filling disk
$\Gamma_1$ in $A(r_1,3R_1)$ with index
$$m_1=c^*\frac{T(R_1,f)}{(\log\log R_1)^2(\log R_1)^2}>6\log R_1$$ and $r_1$
defined by (\ref{equ4.3}) with $R=R_1$. Noting that
$\chi(z,\infty)>\frac{3}{2}e^{-m_1}$ on $|z|=\frac{1}{2}e^{m_1}$ and
the spherical diameter of the circle $|z|=\frac{1}{2}e^{m_1}$ is
larger than $2e^{-m_1}$, according to the definition of filling
disks, there exists a point $z_1\in\Gamma_1$ such that
$|f(z_1)|=\frac{1}{2}e^{m_1}$. Set
$$W_1=\{z:\ 2<|z|<\frac{1}{128}|f(z_1)|\}.$$
We need to treat three cases.

{\bf Case A.}\ Assume that there exists a point $w_0\in W_1$ such
that $w_0\not\in f(5\Gamma_1)$ and $f$ is analytic in $5\Gamma_1$.
Here and henceforth, for a disk $\Gamma=B(a,r)$, we define
$5\Gamma=B(a,5r)$. Since ${\rm diam}_{\chi}(B(w_0,2))>5e^{-m_1}$,
there exists a point $z_0\in\Gamma_1$ such that $|f(z_0)-w_0|\leq
2$. Set $g(z)=f(z+w_0)-w_0$. Then $0\not\in g(H_1)$ with
$H_1=5\Gamma_1-w_0$. In view of Lemma \ref{lem3.2}, by noting that
$d_{H_1}(z_0-w_0,z_1-w_0)=d_{5\Gamma_1}(z_0,z_1)\leq\log\frac{17}{7}<1$
and $|g(z_1-w_0)|\geq |f(z_1)|-|w_0|\geq\frac{1}{2}|f(z_1)|,$ we
have
$$g(H_1)\supset A(2,|g(z_1-w_0)|)\supset
A\left(2,\frac{1}{2}|f(z_1)|\right).$$ Set
$\hat{R}_2=\frac{1}{32}|f(z_1)|$. In view of Lemma \ref{lem4.4}, $g$
has a filling disk $\hat{\Gamma}_2$ in $A(\hat{r}_2,3\hat{R}_2)$
with index
$$\hat{m}_2=c^*\frac{T(\hat{R}_2,g)}{(\log\log \hat{R}_2)^2(\log \hat{R}_2)^2}$$
and $\hat{r}_2=\frac{1}{2e}T^{-1}\left(\frac{T(\hat{R}_2,g)}{12\log
\hat{R}_2},g\right)>32.$ Thus $5\hat{\Gamma}_2\subset g(H_1).$

By $\hat{\gamma}_{21}$ and $\hat{\gamma}_{22}$ we denote the two
exceptional disks of $g$ for the filling disk $\hat{\Gamma}_2$. Then
$\gamma_{21}=\hat{\gamma}_{21}+w_0$ and
$\gamma_{22}=\hat{\gamma}_{22}+w_0$ are the exceptional disks of $f$
for $\Gamma_2=\hat{\Gamma}_2+w_0$. We can assume without any loss of
generality that the center of $\gamma_{21}$ is a finite number and
$\infty$ is the center of $\gamma_{22}$. For any two points $z_1$
and $z_2$ in $\gamma_{21}$, set $\hat{z}_1=z_1-w_0$ and
$\hat{z}_2=z_2-w_0$ in $\hat{\gamma}_{21}$. Then
$$\chi(z_1,z_2)
=\frac{\sqrt{1+|\hat{z}_1|^2}\sqrt{1+|\hat{z}_2|^2}}{\sqrt{1+|z_1|^2}\sqrt{1+|z_2|^2}}\chi(\hat{z}_1,\hat{z}_2)$$
$$\leq
\sqrt{\frac{1+|\hat{z}_1|^2}{1+(|\hat{z}_1|-|w_0|)^2}}\sqrt{\frac{1+|\hat{z}_2|^2}{1+(|\hat{z}_2|-|w_0|)^2}}
\chi(\hat{z}_1,\hat{z}_2)$$
$$\leq\frac{1}{4}(|w_0|+\sqrt{4+|w_0|^2})^2\chi(\hat{z}_1,\hat{z}_2)<3R_2^2\chi(\hat{z}_1,\hat{z}_2).$$
And for a point
$\hat{z}\in\hat{\gamma}_{22}$, by noting that
$\chi(\hat{z},\infty)<e^{-\hat{m}_2}$, we have
$|\hat{z}|>e^{\hat{m}_2}-1>R^3_2>2|w_0|$ and so for
$z=\hat{z}+w_0\in\gamma_{22}$,
$$\chi(z,\infty)=\frac{\sqrt{1+|\hat{z}|^2}}{\sqrt{1+|z|^2}}\chi(\hat{z},\infty)\leq
\sqrt{\frac{1+|\hat{z}|^2}{1+(|\hat{z}|-|w_0|)^2}}\chi(\hat{z},\infty)$$
$$<\frac{|\hat{z}|}{|\hat{z}|-|w_0|}\chi(\hat{z},\infty)<2\chi(\hat{z},\infty).$$

Set \begin{equation}\label{equ4.4}m_2=c^*\frac{T(R_2,f)}{5(\log\log
R_2)^2(\log R_2)^3},\end{equation} with $R_2=\frac{1}{4}\hat{R}_2.$
Since $B(0,R_2)\subset B(0,\frac{1}{2}\hat{R}_2)+w_0$, we have
$$\mathcal{A}\left(\frac{1}{2}\hat{R}_2,f(z+w_0)\right)
=\mathcal{A}\left(B\left(0,\frac{1}{2}\hat{R}_2\right)+w_0,f(z)\right)$$
\begin{equation}\label{3.5}\geq \mathcal{A}(B(0,R_2),f(z))=\mathcal{A}(R_2,f).\end{equation} Therefore, we have
\begin{eqnarray*}T(\hat{R}_2,g)&=&T(\hat{R}_2,f(z+w_0)-w_0)\\
&=&N(\hat{R}_2,f(z+w_0))+m(\hat{R}_2,f(z+w_0)-w_0)\\
&\geq& T(\hat{R}_2,f(z+w_0))-\log |w_0|-\log 2\\
&\geq&
T(\hat{R}_2,f(z+w_0))-T\left(\frac{1}{2}\hat{R}_2,f(z+w_0)\right)-\log
|w_0|-\log 2\\
&\geq&\mathcal{T}(\hat{R}_2,f(z+w_0))-\mathcal{T}\left(\frac{1}{2}\hat{R}_2,f(z+w_0)\right)-\log|w_0|-\frac{3}{2}\log
2\\
&=&
\int_{\frac{1}{2}\hat{R}_2}^{\hat{R}_2}\frac{\mathcal{A}(t,f(z+w_0))}{t}{\rm
d}t-\log|w_0|-\frac{3}{2}\log
2\\
&\geq&\mathcal{A}\left(\frac{1}{2}\hat{R}_2,f(z+w_0)\right)\log
2-\log|w_0|-\frac{3}{2}\log
2\\
&\geq& \mathcal{A}(R_2,f)\log 2-\log|w_0|-\frac{3}{2}\log 2\\
& \geq&\frac{\log 2}{\log
R_2}\int_{1}^{R_2}\frac{\mathcal{A}(t,f)}{t}{\rm
d}t-\log|w_0|-\frac{3}{2}\log
2\\
&=&\frac{\log 2}{\log R_2}T(R_2,f)+O(1)-\log|w_0|-\frac{3}{2}\log
2\\
&\geq &\frac{1}{2\log R_2}T(R_2,f).
\end{eqnarray*}
This implies that $\hat{m}_2>2m_2$ and so
$3R^2_2e^{-\hat{m}_2}<3R^2_2e^{-2{m}_2}<e^{-m_2}.$ Therefore, $f$
has the filling disk $\Gamma_2$ with index $m_2.$ Since
$5\hat{\Gamma}_2\subset g(H_1)$, we have $5\Gamma_2-w_0\subset
f(5\Gamma_1)-w_0$ and so $5\Gamma_2\subset f(5\Gamma_1).$

{\bf Case B.}\ Assume that $W_1\subset f(5\Gamma_1)$ and $f$ is
analytic in $5\Gamma_1$. In view of Lemma \ref{lem4.4}, we have a
filling disk $\Gamma_2$ of $f$ in $A(r_2,3R_2)\subset W_1$ with
index $m_2$, where $R_2=\frac{1}{640}|f(z_1)|$, and
$5\Gamma_2\subset f(5\Gamma_1)$.

{\bf Case C.}\ Assume that $5\Gamma_1$ contains a pole of $f$. Then
for some $\hat{r}_2>0$, $\{z:\ |z|>\hat{r}_2\}\subset f(5\Gamma_1)$.
Take a sufficiently large $R_2>R^3_1$ such that $r_2>5\hat{r}_2$. In
view of Lemma \ref{lem4.4}, there exists a filling disk $\Gamma_2$
of $f$ in $A(r_2,3R_2)$ with index $m_2$. Obviously,
$5\Gamma_2\subset A(r_2/5,15R_2)\subset f(5\Gamma_1).$

In one word, there exists a filling disk $\Gamma_2$ of $f$ with
index $m_2$ given in (\ref{equ4.4}) where $R_2>R^3_1$ and
$5\Gamma_2\subset f(5\Gamma_1)$. Proceeding step by step, we obtain
a sequence of filling disks $\{\Gamma_n\}$ of $f$ with index $m_n$
given by (\ref{equ4.4}) with $R_2$ replaced by $R_n$ and
$R_n>R_{n-1}^3\to\infty (n\to\infty)$ and $5\Gamma_n\subset
f(5\Gamma_{n-1})$. Since $f$ is meromorphic on the complex plane,
$f$ takes any value at most finitely many times on any bounded
subset of the complex plane and therefore, by $m_n\to\infty
(n\to\infty)$, we know that ${\rm dist}(\Gamma_n,0)\to\infty
(n\to\infty).$ It is easily seen that for every $n$, $\Gamma_n\cap
J(f)\not=\emptyset.$ In view of Lemma \ref{lem4.1}, there exists a
point $a\in I(f)\cap J(f)$ such that $f^n(a)\in 5\Gamma_n$. Of
course, $5\Gamma_n$ is also a filling disk of $f$ with index $m_n$.

We complete the proof of Theorem \ref{thm1.5}.

\subsection{Proof of Theorem \ref{thm1.4+}}

From the proof of Theorem \ref{thm1.5}, we have a sequence of
filling disks $B_n$ such that $B_{n+1}\subset f(B_n)$ centered at
$z_n\to\infty$ with index $m_n$ having the order in
$[\lambda,\rho]$. Therefore, the radius of exceptional disks is
$e^{-m_n}\to 0(n\to\infty)$.

Noting that $E$ is closed in $[0,2\pi]$, we can choose a sequence of
$\{\theta_p\}_{p=1}^N$ with $1\leq N\leq+\infty$ such that the
closure $\overline{\{\theta_p:\ p=1,2,\cdots N\}}=E$. In view of
Lemma \ref{lem2.6+}, for every $\theta_p$, there exist a sequence of
filling disks $\{A_{pj}\}_{j=1}^\infty$ with index $m_{pj}$ centered
at $z_{pj}=r_{pj}e^{i\theta_p}, j=1,2,\cdots$ with $r_{pj}\to\infty
(j\to\infty)$ and $m_{p(j+1)}>m_{pj}$. And we can require that
$r_{p(j+1)}>2r_{pj}$ and $r_{(p+1)1}>2r_{p1}\to+\infty
(p\to\infty)$. Since the exceptional disks of $A_{pj}$ has the
radius at most $e^{-m_{pj}}$, choosing the sufficiently large
$r_{p1}$ we have that for every $j$ there exists a $B_{n(pj)}$ with
$n(pj)\to\infty(p\to\infty)$ such that
$$B_{n(pj)}\subset f(A_{pj}).$$

Now let us write $A_{pj}, j=1,2,\cdots, p=1,2,\cdots$ in the
following order:
$$A_{11},\ A_{12},\ A_{21}, A_{13}, A_{22}, A_{31},\cdots.$$
Take a sufficiently large $n_0$ and then one of $A_{11},\ A_{12},\
A_{13}$ is in $f(B_{n_0})$, denoted by $C_{11}$. $f(C_{11})$
contains one $B_{n_{11}}$. Take $B_{n_{11}+1},...,B_{n_{11}+p_{11}}$
such that $f(B_{n_{11}+p_{11}})$ contains one of $A_{12},\ A_{13},
A_{14}$, denoted by $C_{12}$. Then $f(C_{12})$ contains one
$B_{n_2}$. Take $B_{n_{12}+1},...,B_{n_{12}+p_{12}}$ such that
$f(B_{n_{12}+p_{12}})$ contains one of $A_{21}, A_{22}, A_{23}$,
denoted by $C_{21}$. We go forever in this way to obtain a sequence
of filling disks:
\begin{eqnarray*}
&B_{n_0}\\
&C_{11},B_{n_{11}}, B_{n_{11}+1},\cdots,B_{n_{11}+p_{11}},\\
&\cdots\\
&C_{sk},B_{n_{sk}},B_{n_{sk}+1},\cdots,B_{n_{sk}+p_{sk}}\\
&\cdots,\end{eqnarray*} where $C_{sk}$ is one of $A_{sk},
A_{s(k+1)}$ and $A_{s(k+2})$. In view of Lemma \ref{lem4.1}, there
exists a point $a\in I(f)\cap J(f)$ such that $f^n(a)$ goes along
the sequence of above filling disks. Then $a$ satisfies our
requirement.

\subsection{Proof of Theorem \ref{th2}}

Under (\ref{equ1.7}), we have (\ref{th2 assumption}). Take $r_1$
sufficiently large and set $R_1=r_1^2$ such that
$$T(R_1,f)\geq \max\left\{240, \frac{240 \log(2R_1)}{\log
2}, 12T(er_1,f)\log (2r_1)\right\},$$ $q_1=\log r_1$ and, in view of
(\ref{equ1.6}),
$$c^*\frac{T(R_1,f)}{(\log r_1)^4}>2\log (1+2R_1),$$
where $c^*$ is the constant in Lemma \ref{lem4.3}. Applying Lemma
\ref{lem4.3}, then there exists a $z_1$ lying in annulus $\{z:
r_1<|z|<2R_1\}$ such that
$$\Gamma_1: |z-z_1|<\frac{4\pi}{q_1}|z_1|$$
is a filling disk of $f$ with index
$$n_1=c^*\frac{T(R_1,f)}{q_1^2(\log r_1)^2}=c^*\frac{T(R_1,f)}{(\log r_1)^4}.$$

Take $r_2,\ R_2=r_2^2$ and $q_2=\log r_2$ such that
$$r_2>\frac{2R_1+(1+2R_1)e^{-n_1}}{1-e^{-n_1}(1+2R_1)}.$$
This implies that $\chi(r_2,2R_1)>e^{-n_1}$, where $\chi(z,w)$
denotes the spherical distance of $z$ and $w$. There exists a $z_2$
lying in annulus $\{z: r_2<|z|<2R_2\}$ such that
$$\Gamma_2: |z-z_2|<\frac{4\pi}{q_2}|z_2|$$
is a filling disk of $f$ with index
$$n_2=c^*\frac{T(R_2,f)}{(\log r_2)^4}.$$ And the spherical distance of $\Gamma_1$ and $\Gamma_2$ is at
least $\chi(r_2,2R_1)>e^{-n_1}>e^{-n_2}.$

In view of the same method, we have $r_j, R_j, q_j, n_j$ and
$\Gamma_j\ (j=1,2,\cdots,5)$ such that the spherical distance of
$\Gamma_j$ and $\Gamma_i$ with $i\not=j$ is larger than $e^{-n_t}
(t=1,2,\cdots,5)$.

Take $B_1=\Gamma_1$. We can have $B_2=\Gamma_i$ for some $2\leq
i\leq 4$ with $f(B_1)\supset B_2$ and then $B_3=\Gamma_j$ for some
$j\in\{2,3,4,5\}\setminus\{i\}$ with $f(B_2)\supset B_3$. It is easy
to see that $f(B_3)\supset B_1, B_2$ or $B_3$. Starting from $B_3$,
we go on the same step to obtain $B_4$ and $B_5$ such that
$f(B_j)\supset B_{j+1}$ with $j=3,4$ and $f(B_5)\supset B_3, B_4$ or
$B_5$. Thus we obtain a sequence of disks $\{B_n\}$ such that
$f(B_n)\supset B_{n+1}$ and $f(B_{2n+1})\supset B_{2n-1}, B_{2n}$ or
$B_{2n+1}$.

We can take $r_n$ such that $\lim\limits_{n\to\infty}\frac{\log
T(R_n,f)}{\log R_n}=\rho(f)$. In view of Lemma \ref{lem4.2}, we
complete the proof of Theorem \ref{th2}.

\subsection{Proof of Theorem \ref{thm1.8}}

It follows from (\ref{equ1.8}) that
$$\frac{1}{2e}T^{-1}\left(\frac{T(r^{1+\sigma},f)}{12\log
r^{1+\sigma}},f\right)>r.$$ In view of Lemma \ref{lem4.4}, for any
$\sigma>0$ and all sufficiently large $r$, the annulus
$A(r^{1-\sigma},r)$ contains a filling disk with index
$m(r)\to\infty (r\to\infty).$ Since $J(f)$ is non-empty and
unbounded, $F(f)$ cannot contain any filling disks with large index
so that given arbitrarily $0<\sigma<1$, for large $r$, under
(\ref{equ1.7}), $F(f)$ cannot contain any annulus
$A(r^{1-\sigma},r)$.

Suppose that $f$ has a multiply connected Fatou component. Then
there exist a sequence of annuli $\{A(r_n^{1-\sigma},r_n)\}$ with
$r_n\to\infty$ for some $0<\sigma<1$ in the Fatou set $F(f)$; see
Theorem 1.2 in \cite{BRS} and Theorem 1.1 in \cite{zheng 16}. This
derives a contradiction. Theorem \ref{thm1.8} follows.

\section{Proof of Theorem \ref{thm1.3} and Theorem \ref{thm1.5+}}

\subsection{Some Lemmas}

We need the Nevanlinna characteristic in an angle; see \cite{gold
and ostr, zheng book 2}. We set
$$\Omega(\alpha,\beta)=\{z:\ \alpha<\arg z<\beta\}$$
with $0\leq\alpha<\beta\leq 2\pi$
and denote by $\overline{\Omega}(\alpha,\beta)$ the closure of
$\Omega(\alpha,\beta)$. Let $f(z)$ be meromorphic on the angle
$\overline{\Omega}(\alpha,\beta)$. We define
\begin{eqnarray*}A_{\alpha,\beta}(r,f)&=&\frac{\omega}{\pi}\int_1^r
\left(\frac{1}{t^\omega}-\frac{t^\omega}{r^{2\omega}}\right)
\{\log^+|f(te^{i\alpha})|+\log^+|f(te^{i\beta})|\}\frac{{\rm
d}t}{t};\\
B_{\alpha,\beta}(r,f)&=&\frac{2\omega}{\pi
r^w}\int_{\alpha}^{\beta}\log^+|f(re^{i\theta})|\sin
\omega(\theta-\alpha){\rm d}\theta;\\
C_{\alpha,\beta}(r,f)&=&2\sum_{1<|b_n|<r}\left(\frac{1}{|b_n|^\omega}-\frac{|b_n|^\omega}{r^{2\omega}}\right)\sin
\omega(\beta_n-\alpha),\end{eqnarray*} where
$\omega=\pi/(\beta-\alpha)$, and $b_n=|b_n|e^{i\beta_n}$ are poles
of $f(z)$ in $\overline{\Omega}(\alpha,\beta)$ appearing according
to their multiplicities and define
$C_{\alpha,\beta}(r,f=a)=C_{\alpha,\beta}(r,1/(f-a))$. The
Nevanlinna angular characteristic is defined as
$$S_{\alpha,\beta}(r,f)=A_{\alpha,\beta}(r,f)+B_{\alpha,\beta}(r,f)+C_{\alpha,\beta}(r,f).$$

\begin{lemma} (\cite{zheng book 2}, Lemma 2.2.2) \label{lemma 2}
Let $f(z)$ be a meromorphic function on $\overline{\Omega}(\alpha,
\beta)$. Then we have the following
$$C_{\alpha,\beta}(r,f=a)\leq 4\omega \frac{N(r,\Omega,f=a)}{r^{\omega}}
+2\omega^2 \int_{1}^{r}\frac{N(t,\Omega,f=a)}{t^{\omega+1}}{\rm
d}t.$$ The inequality also holds for $a=\infty$.
\end{lemma}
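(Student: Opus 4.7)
The strategy is to start from the definition of $C_{\alpha,\beta}(r,f=a)$ as a sum over the $a$-points $b_n=|b_n|e^{i\beta_n}$ in $\overline{\Omega}(\alpha,\beta)$, then apply two successive Abel/Stieltjes integrations by parts to convert the point count $n(t,\Omega,f=a)$ into the integrated counting function $N(t,\Omega,f=a)$. This is a standard manipulation of Nevanlinna-type angular sums, and the only real content is bookkeeping of the resulting constants.

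First, since $\alpha\leq\beta_n\leq\beta$ one has $\omega(\beta_n-\alpha)\in[0,\pi]$, hence $\sin\omega(\beta_n-\alpha)\in[0,1]$. Setting $h(t)=t^{-\omega}-t^{\omega}r^{-2\omega}$, which is nonnegative on $[1,r]$ and vanishes at $t=r$, the plan is to estimate
$$C_{\alpha,\beta}(r,f=a)\leq 2\sum_{1<|b_n|<r}h(|b_n|)=2\int_1^r h(t)\,dn(t),$$
where $n(t)=n(t,\Omega,f=a)$. A first integration by parts in the Stieltjes sense, using $h(r)=0$ together with the monotonicity of $n$, yields
$$\int_1^r h(t)\,dn(t)\leq \omega\int_1^r \frac{n(t)}{t^{\omega+1}}\,dt+\frac{\omega}{r^{2\omega}}\int_1^r n(t)\,t^{\omega-1}\,dt.$$

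The key for the second step is that the paper's definition gives $n(t)=tN'(t)$ with $N(1)=0$. Integrating each of the two right-hand integrals by parts a second time produces
$$\int_1^r\frac{n(t)}{t^{\omega+1}}\,dt=\frac{N(r)}{r^{\omega}}+\omega\int_1^r\frac{N(t)}{t^{\omega+1}}\,dt$$
and
$$\int_1^r n(t)\,t^{\omega-1}\,dt=r^{\omega}N(r)-\omega\int_1^r t^{\omega-1}N(t)\,dt\leq r^{\omega}N(r).$$
Substituting both back and multiplying through by $2$ delivers exactly
$$C_{\alpha,\beta}(r,f=a)\leq 4\omega\frac{N(r,\Omega,f=a)}{r^{\omega}}+2\omega^2\int_1^r\frac{N(t,\Omega,f=a)}{t^{\omega+1}}\,dt.$$
The case $a=\infty$ requires no modification, since $C_{\alpha,\beta}(r,f)$ is already defined through the poles of $f$ in $\overline{\Omega}(\alpha,\beta)$, and the same argument applied to $n(t,\Omega,f)$ and $N(t,\Omega,f)$ yields the same bound. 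There is no substantive obstacle: the endpoint terms either vanish (via $h(r)=0$ and $N(1)=0$) or have a sign that only strengthens the inequality, so the proof reduces to verifying that the two integration-by-parts steps land with precisely the constants $4\omega$ and $2\omega^2$.
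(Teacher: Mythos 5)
Your proof is correct, and the bookkeeping checks out: the first Stieltjes integration by parts (using $h(r)=0$ and $-h(1)n(1)\leq 0$) gives $\int_1^r h\,dn \leq \omega\int_1^r n(t)t^{-\omega-1}\,dt + \omega r^{-2\omega}\int_1^r n(t)t^{\omega-1}\,dt$, and the second pass, using $n(t)=tN'(t)$ and $N(1)=0$ and discarding the nonpositive remainder $-\omega\int_1^r N(t)t^{\omega-1}\,dt$, lands exactly on the constants $4\omega$ and $2\omega^2$ after multiplying through by $2$. The paper itself does not reproduce a proof (it simply cites Lemma 2.2.2 of \cite{zheng book 2}), but this two-stage Abel-summation argument is the standard derivation of such bounds on the angular counting term $C_{\alpha,\beta}$, so there is every reason to believe your route is essentially the one in the cited source.

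One minor point worth being explicit about: $n(t,\Omega,f=a)$ as defined in the paper counts $a$-points in $\Omega\cap\{|z|<t\}$ and hence may have $n(1^+)>0$, whereas the sum in $C_{\alpha,\beta}$ runs only over $1<|b_n|<r$. This creates no problem, since the extra boundary term $-h(1)n(1)$ is nonpositive (or, equivalently, one can work with the truncated count $\tilde n(t)=n(t)-n(1)\leq n(t)$ and observe that replacing $\tilde n$ by $n$ in the resulting integrals only increases the right-hand side), but it is the one place where the phrase ``sign only strengthens the inequality'' is doing genuine work and deserves a sentence in a written-out version.
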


\begin{lemma} (\cite{zheng book 2}, The inequality (2.2.6) and Lemma 2.5.3) \label{lemma 3}
Let $f(z)$ be a meromorphic function. Then for any two distinct
values $a_1$ and $a_2$ on $\mathbb{C}$, we have
$$S_{\alpha,\beta}(r,f)\leq C_{\alpha,\beta}(r,f)+\sum_{\nu=1}^2 \overline{C}_{\alpha,\beta}(r,f=a_v)+O((\log rT(r,f)),$$
for all $r>0$ with a possible exception of finite-measure set of
$r$.
\end{lemma}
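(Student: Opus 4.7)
This is the angular analog of Nevanlinna's Second Fundamental Theorem for the angular characteristic $S_{\alpha,\beta}$. The plan is to adapt the classical three-value Nevanlinna argument (with the values $a_1$, $a_2$, $\infty$) to the sector $\overline{\Omega}(\alpha,\beta)$, working term by term on the $A$, $B$, $C$ pieces that make up $S_{\alpha,\beta}$; this is essentially the route taken in \cite{zheng book 2}.

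First I would reduce to a proximity-type inequality: since $S_{\alpha,\beta}(r,f)=A_{\alpha,\beta}(r,f)+B_{\alpha,\beta}(r,f)+C_{\alpha,\beta}(r,f)$, the desired bound is equivalent to
$$A_{\alpha,\beta}(r,f)+B_{\alpha,\beta}(r,f) \leq \sum_{\nu=1}^{2}\overline{C}_{\alpha,\beta}(r,f=a_\nu)+O(\log(rT(r,f)))$$
outside a set of finite linear measure. To see $a_\nu$-points on the right, I would start from the pointwise estimate
$$\log^+|f(z)| \leq \sum_{\nu=1}^{2}\log^+\frac{1}{|f(z)-a_\nu|}+O(1),$$
valid wherever $|f(z)|$ is large (and trivially elsewhere), integrate it against the positive kernels defining $A_{\alpha,\beta}$ (on the boundary rays $\arg z=\alpha,\beta$) and $B_{\alpha,\beta}$ (on the arc $|z|=r$, $\alpha\leq\arg z\leq\beta$), and conclude
$$A_{\alpha,\beta}(r,f)+B_{\alpha,\beta}(r,f)\leq \sum_{\nu=1}^{2}\bigl(A_{\alpha,\beta}(r,1/(f-a_\nu))+B_{\alpha,\beta}(r,1/(f-a_\nu))\bigr)+O(1).$$

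Next, for each $\nu$ I would factor $1/(f-a_\nu)=\bigl(f'/(f-a_\nu)\bigr)\cdot(1/f')$, so that $\log^+|1/(f-a_\nu)|\leq\log^+|f'/(f-a_\nu)|+\log^+|1/f'|$, and apply the \emph{angular Nevanlinna lemma on the logarithmic derivative},
$$A_{\alpha,\beta}(r,f'/(f-a_\nu))+B_{\alpha,\beta}(r,f'/(f-a_\nu))=O(\log(rT(r,f))),$$
outside an exceptional set of finite linear measure. The contribution from $1/f'$ is absorbed similarly after using the angular first main theorem $S_{\alpha,\beta}(r,1/(f-a_\nu))=S_{\alpha,\beta}(r,f)+O(\log r)$. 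What survives on the right is $S_{\alpha,\beta}(r, f'/(f-a_\nu))$ minus its $A{+}B$ part, i.e.\ just $C_{\alpha,\beta}(r, f'/(f-a_\nu))$; since $f'/(f-a_\nu)$ has a \emph{simple} pole at every $a_\nu$-point of $f$ regardless of multiplicity, this last term is exactly $\overline{C}_{\alpha,\beta}(r,f=a_\nu)$ (up to bounded error from the poles of $f$, which are cancelled once the $C_{\alpha,\beta}(r,f)$ term is reinstated on the other side of the original inequality). Summing over $\nu=1,2$ and adding $C_{\alpha,\beta}(r,f)$ to both sides yields the lemma.

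The main obstacle, as in the classical case, is the angular logarithmic derivative lemma itself: establishing the $O(\log(rT(r,f)))$ bound — and thereby the finite-measure exceptional set in $r$ — requires the sectorial Poisson-Jensen representation for $\log|f|$ in $\overline{\Omega}(\alpha,\beta)$ together with a Borel-type growth lemma applied to $T(r,f)$. This is more delicate than the classical global version because of the boundary contributions along the rays $\arg z=\alpha$ and $\arg z=\beta$, which must be controlled through the $A_{\alpha,\beta}$-weight. Once that ingredient is imported from \cite{zheng book 2}, the remaining work is routine bookkeeping of the three sector characteristics $A$, $B$, $C$, and the truncation from $C_{\alpha,\beta}$ to $\overline{C}_{\alpha,\beta}$ via the simple-pole property of $f'/(f-a_\nu)$.
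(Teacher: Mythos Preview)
The paper does not prove this lemma at all: it is quoted verbatim from \cite{zheng book 2} (inequality (2.2.6) together with Lemma 2.5.3 there) and used as a black box in the proof of Theorem \ref{thm1.3}. So there is no ``paper's own proof'' to compare against; your sketch is a reconstruction of the standard angular second main theorem argument that lives in the cited reference.

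That said, your reconstruction has one concrete error. The displayed pointwise estimate
\[
\log^+|f(z)| \leq \sum_{\nu=1}^{2}\log^+\frac{1}{|f(z)-a_\nu|}+O(1)
\]
is false in exactly the regime you invoke it. If $|f(z)|$ is large then $|f(z)-a_\nu|$ is comparably large, so each $\log^+\frac{1}{|f(z)-a_\nu|}$ vanishes and the right-hand side is $O(1)$, while the left-hand side is large. You cannot get from the angular proximity of $f$ to the angular proximity of $1/(f-a_\nu)$ by a pointwise inequality of this shape; the passage necessarily goes through the angular First Main Theorem $S_{\alpha,\beta}(r,f)=S_{\alpha,\beta}(r,1/(f-a_\nu))+O(1)$, which converts $A_{\alpha,\beta}+B_{\alpha,\beta}$ of $f$ into $S_{\alpha,\beta}-C_{\alpha,\beta}$ and then into counting-function terms on the $a_\nu$ side. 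Once you make that correction, the remainder of your outline (bounding $\sum_\nu (A_{\alpha,\beta}+B_{\alpha,\beta})(r,1/(f-a_\nu))$ via the angular logarithmic-derivative lemma applied to $f'/(f-a_\nu)$, then recovering $\overline{C}_{\alpha,\beta}$ from the simple poles of $f'/(f-a_\nu)$, with the exceptional set coming from the Borel-type growth lemma) is the standard route and matches what is done in \cite{zheng book 2}.
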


We need the following lemma, which is established in terms of the
hyperbolic metric.

\begin{lemma}\label{lem3.1} (\cite{zheng 16}, Theorem 2.4) \label{zheng's lemma}
Let $h(z)$ be an analytic function on the annulus $A(r,R)=\{z:
r<|z|<R\}$ with $0<r<R<\infty$ such that $|h(z)|>1$ on $A(r,R)$.
Then
\begin{equation}\label{equ3.1}
\log L(\rho,h) \geq
\exp\Big(-\frac{\pi^2}{2}\max\left\{\frac{1}{\log
\frac{R}{\rho}},\frac{1}{\log \frac{\rho}{r}}\right\}\Big)\log
M(\rho,h),
\end{equation}
where $\rho \in (r,R)$ and $L(\rho,h)=\min\{|h(z)|: |z|=\rho\}$.
\end{lemma}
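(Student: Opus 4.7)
The plan is to apply Harnack's inequality to the positive harmonic function $u(z) := \log|h(z)|$ on $A(r,R)$, using the hyperbolic metric of the annulus. Since $|h|>1$ and $h$ is analytic and nonvanishing on $A(r,R)$, $u$ is harmonic and strictly positive, and $\log L(\rho,h)$ and $\log M(\rho,h)$ are simply its minimum and maximum on the circle $|z|=\rho$. So the claim reduces to a Harnack comparison between two points of that circle.

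First I would record the hyperbolic density on $A(r,R)$. The universal cover is the vertical strip $\Sigma := \{w : \log r < \operatorname{Re} w < \log R\}$ via $z = e^w$, and the strip carries the standard curvature-$(-1)$ density $\pi/[\log(R/r)\sin(\pi(\operatorname{Re} w - \log r)/\log(R/r))]$. Pushing this down by the covering map gives
$$
\lambda_{A(r,R)}(z) \;=\; \frac{\pi}{|z|\,\log(R/r)\,\sin\!\bigl(\pi\log(|z|/r)/\log(R/r)\bigr)}.
$$

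Next I bound the hyperbolic distance on $A(r,R)$ between any two points of $\{|z|=\rho\}$. Their lifts to $\Sigma$ lie on the vertical line $\operatorname{Re} w = \log(\rho/r)$, and since the deck group is generated by $w\mapsto w+2\pi i$, I may always choose lifts whose imaginary parts differ by at most $\pi$. The hyperbolic length of the joining vertical segment is then at most $\pi^2/[\log(R/r)\sin(\pi s)]$ with $s := \log(\rho/r)/\log(R/r) \in (0,1)$, and the elementary inequality $\sin(\pi s) \geq 2\min\{s,1-s\}$ upgrades this bound to
$$
D(\rho) \;:=\; \frac{\pi^2}{2}\max\!\left\{\frac{1}{\log(R/\rho)},\,\frac{1}{\log(\rho/r)}\right\}.
$$

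Finally, lifting $u$ to $\Sigma$ yields a positive harmonic function on a simply connected surface, and the classical Harnack inequality there reads $\tilde u(w_1) \geq e^{-d_\Sigma(w_1,w_2)}\,\tilde u(w_2)$, where $d_\Sigma$ is the curvature-$(-1)$ hyperbolic distance. Applied to the lifts constructed above, with $z_1,z_2$ on $|z|=\rho$ realizing $L(\rho,h)$ and $M(\rho,h)$ respectively, this gives $\log L(\rho,h) \geq e^{-D(\rho)}\log M(\rho,h)$, which is exactly \eqref{equ3.1}. The main subtlety is tracking the curvature normalization so that Harnack delivers precisely the exponent $-d$; this is confirmed by the disk Harnack bound $u(z) \geq e^{-d(0,z)}u(0)$ with $d(0,z) = \log\frac{1+|z|}{1-|z|}$, which matches the strip density used above and so produces the sharp constant $\pi^2/2$ rather than a weaker one.
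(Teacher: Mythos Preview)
The paper does not supply its own proof of this lemma; it simply quotes it from \cite{zheng 16}, Theorem~2.4. Your argument is correct and self-contained. The key reduction---that $u=\log|h|$ is a positive harmonic function on $A(r,R)$, so the inequality is a Harnack estimate---is exactly right, and your computation of the hyperbolic diameter of the circle $\{|z|=\rho\}$ via lifting to the covering strip and the elementary bound $\sin(\pi s)\geq 2\min\{s,1-s\}$ delivers precisely the constant $\frac{\pi^2}{2}\max\{1/\log(R/\rho),\,1/\log(\rho/r)\}$. The curvature check you include at the end is the correct sanity test: with the curvature $-1$ density on the disk one has $d(0,z)=\log\frac{1+|z|}{1-|z|}$ and disk Harnack reads $u(z)\geq e^{-d(0,z)}u(0)$, which is consistent with the strip density you use and hence with the annulus density the paper itself employs elsewhere (e.g.\ in the proof of Theorem~\ref{thm1.3}). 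Given the title and methods of \cite{zheng 16}, the original proof is almost certainly along the same hyperbolic-metric lines.
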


In Lemma \ref{lem3.1}, when $\rho=\sqrt{rR}$, we have
\begin{equation}\label{equ}\log M(\rho,h) \leq
\exp\Big(\frac{\pi^2}{\log \frac{R}{r}}\Big)\log L(\rho,h).
\end{equation}

\subsection{Proof of Theorem \ref{thm1.3}}

On the contrary, suppose that there exist a sequence of annuli
$A_n=\{z:r_n<|z|<R_n\}$ in $F(f)$ with $R_n\geq(1+\phi(r_n)/\log
T(r_n,f))r_n$, $r_{n+1}>r_n$ and $r_n\to\infty (n\to \infty)$. We
treat two cases.

{\bf Case A.}\ There exists a subsequence of $A_n$ such that
$f(A_n)\subset\{|z|>1\}$. Without loss of generality, we assume that
for all $n$, $f(A_n)\subset\{|z|>1\}$. Set $\rho_n=\sqrt{R_nr_n}.$
Since $|f(z)|>1$ on $A_n$, using Lemma \ref{zheng's lemma}, we have
$$\log L(\rho_n,f)\geq \lambda_n\log M(\rho_n,f),$$
where $\lambda_n=\exp\left(-\frac{\pi^2}{\log
\frac{R_n}{r_n}}\right)$.

For the angular domain $\Omega(\alpha,\beta)$, according to the
definition of $B_{\alpha,\beta}(\rho_n,f)$, we have
\begin{equation} \label{4.1}
\begin{split} \rho_n^{\omega}B_{\alpha, \beta}(\rho_n,f)
      & = \frac{2\omega}{\pi}\int_{\alpha}^{\beta}
      \log^{+}|f(\rho_n e^{i\phi})|\sin (\omega(\phi-\alpha)){\rm d}\phi\\
      & \geq \frac{2\omega}{\pi}\log L(\rho_n,f)\int_{\alpha}^{\beta}
      \sin (\omega(\phi-\alpha)){\rm d}\phi\\
      & =\frac{4}{\pi}\log L(\rho_n,f)\geq\frac{4\lambda_n}{\pi}\log
      M(\rho_n,f).
\end{split}
\end{equation}

On the other hand, for any two distinct complex numbers $a_v
~(v=1,2)$, we use Lemma \ref{lemma 2} and Lemma \ref{lemma 3} in
turn to obtain that
\begin{eqnarray} \label{4.2}
\rho_n^{\omega}B_{\alpha, \beta}(\rho_n,f)&\leq &
\rho_n^{\omega}(C_{\alpha, \beta}(\rho_n,f=a_1)
+C_{\alpha, \beta}(\rho_n,f=a_2)) +O(\rho_n^{\omega}\log \rho_nT(\rho_n,f))\nonumber\\
      & \leq &4\omega N(\rho_n)+2\omega^2 \rho_n^{\omega} \int_1^{\rho_n}\frac{N(t)}{t^{\omega+1}}dt+O(\rho_n^{\omega}
      \log \rho_nT(\rho_n,f))\nonumber\\
      & \leq &4\omega N(\rho_n)+2\omega \rho_n^{\omega}N(\rho_n)+O(\rho_n^{\omega}\log \rho_nT(\rho_n,f))\nonumber\\
      & = &(4\omega +2\omega \rho_n^{\omega})N(\rho_n)+O(\rho_n^{\omega}\log \rho_nT(\rho_n,f)),
\end{eqnarray}
where $N(\rho_n)=N(\rho_n,\Omega,f=a_1)+N(\rho_n,\Omega,f=a_2)$.
Combining the inequalities \eqref{4.1}, \eqref{4.2} and (\ref{1})
yields $$(4\omega +2\omega
\rho_n^{\omega})N(\rho_n)+O(\rho_n^{\omega}\log \rho_nT(\rho_n,f))
\geq \lambda_n\log M(\rho_n,f))\geq K\lambda_nT(\rho_n,f),$$ for
some positive constant $K$. Thus
\begin{equation}\label{3.6}
K\lambda_nT(\rho_n,f)\leq 2\max\{(4\omega +2\omega
\rho_n^{\omega})N(\rho_n),O(\rho_n^{\omega}\log
\rho_nT(\rho_n,f))\}.\end{equation} It follows from the definition
of lower order that
$$\lim\limits_{n\to\infty} \frac{\log T(\rho_n,f)}{\log
\rho_n}\geq\lambda,$$ for the above sequence $\{\rho_n\}$. By noting
that
$$\log\lambda_n=-\frac{\pi^2}{\log(R_n/r_n)}\geq-\frac{\pi^2}{\log(1+\phi(r_n)/\log
T(r_n,f))}$$
$$\sim-\frac{\pi^2}{\phi(r_n)}\log T(r_n,f)\geq
-\frac{\pi^2}{\phi(r_n)}\log T(\rho_n,f),$$ we have
$$\lim\limits_{\overline{n\to\infty}}\frac{\log \lambda_n}{\log
T(\rho_n,f)}\geq 0.$$ In view of (\ref{3.6}), we can get
$$\lim_{n\rightarrow \infty}\frac{\log N(\rho_n)}{\log T(\rho_n,f)}\geq 1-\frac{\omega}{\lambda}.$$
Then we get a contradiction for $a$ and $b$.

{\bf Case B.}\ Set
$$c_n=1+\frac{\phi(r_n)}{12\log T(r_n,f)}.$$
When $c_n\leq 2$, we have $c_n^3\leq 1+\phi(r_n)/\log T(r_n,f).$
Consider the annulus $B_n=A(r_n,c_n^3r_n)\subset A_n$ and
$C_n=A(c_nr_n,c_n^2r_n)$. In view of implication in Case A, we can
assume that for all $n$, $f(C_n)\cap\{|z|\leq 1\}\not=\emptyset$.
Then there exist two points $z_0\in C_n$ and $z_n$ with
$|z_n|=\rho_n=c^{3/2}_nr_n$, in view of (\ref{1}),  such that
$$|f(z_0)|= 1+|c|\ \text{and}\ \log |f(z_n)|\geq KT(\rho_n,f)>\log\rho_n,\ {\text for}\ n\geq N,$$
where $c\in J(f)$ with $|c|=\min\limits_{z\in J(f)}|z|$, $K$ is a
positive number and $N$ is a positive integer. Thus for $n\geq N$
and $N\leq k\leq n$, $A_k\cap f(A_n)\not=\emptyset.$ This implies
that $\bigcup_{n=N}^\infty A_n\subset U$, where $U$ is a Fatou
component of $f$ with $f(U)\subseteq U$.

A simple calculation yields
$$\lambda_{B_n}(z)\leq\frac{2\sqrt{3}\pi}{9\log c_n}\frac{1}{\left|z\right|}, \
z\in C_n,\ \text{and}\ \lambda_{B_n}(z)=\frac{\pi}{3\log
c_n}\frac{1}{\left|z\right|},\ |z|=\rho_n.$$ Then
$$d_{B_n}(z_0,z_n)\leq \int_{c_nr_n}^{\rho_n}\frac{2\sqrt{3}\pi}{9\log c_n}\frac{{\rm
d}t}{t}+\int_0^\pi\frac{\pi}{3\log c_n}{\rm d}\theta \leq
\frac{\sqrt{3}\pi}{9}+\frac{\pi^2}{3\log c_n},$$ and
$$\frac{1}{\delta_n}\geq\frac{9\log c_n}{\sqrt{3}\pi\log
c_n+3\pi^2}\geq\frac{\log c_n}{4+\log c_n},$$ where
$\delta_n=d_{B_n}(z_0,z_n)$. It follows that for sufficiently large
$n$, we have
$$|f(z_n)|\geq \exp(KT(\rho_n,f))> e^{\kappa\delta_n}(1+2|c|)+|c|= e^{\kappa\delta_n}(|f(z_0)|+|c|)+|c|.$$
Set $g(z)=f(z)-c$. Then $0\not\in g(B_n)$ and $|g(z_n)|\geq
e^{\kappa\delta_n}|g(z_0)|$. In view of Lemma \ref{lem3.2}, we have
$g(B_n)\supseteq A(d^{-1}_nt_n, d_nt_n)$ with $t_n\geq |g(z_0)|\geq
1$ and
$$d_n:=e^{-\kappa}\left(\frac{|g(z_n)|}{|g(z_0)|}\right)^{1/\delta_n}\geq
\exp\left(-\kappa+\frac{\log c_n}{4+\log
c_n}\frac{K}{2}T(\rho_n,f)\right)>\rho_n^5.$$ Thus
$$f(B_n)\supseteq A(d^{-1}_nt_n, d_nt_n)+q\supseteq A(d^{-1}_nt_n+|c|, d_nt_n-|c|).$$
Set $s_n=\rho_nt_n\geq \rho_n$. Then
$$A(s_n, \rho_n^3s_n)\subset A(d^{-1}_nt_n+|c|, d_nt_n-|c|).$$
This implies that the Fatou component $U$ contains a sequence of
annuli $D_n=A(s_n, \rho_n^3s_n)$.

For simple statement, we assume without loss of generality that
$c=0$. We can assume that $f(E_n)\cap\{z:\ |z|\leq
1\}\not=\emptyset$ with $E_n=A(\rho_ns_n, \rho_n^2s_n)$. We can find
two points $z_0'\in E_n$ and $z_n'$ with $|z_n'|=\rho_n^{3/2}s_n$
such that
$$|f(z_0')|=1\ \text{and}\ \log |f(z_n')|\geq KT(\rho_n^{3/2}s_n,f)>2\log r_n.$$
And we have
$$d_{D_n}(z_0',z_n')\leq\frac{\sqrt{3}\pi}{9}+\frac{\pi^2}{3\log \rho_n}<\frac{9}{10}.$$
Therefore, in view of Lemma \ref{lem3.2}, $f(D_n)\supset
A(|f(z_0')|,f(z_n')|)\supset A(1, r_n^2)$ and furthermore,
$A(1,r_n^2)\subset U$ and so $A(1,R_n)\subset U$. This implies that
$U\supset \{z:\ |z|>1\}$. A contradiction is derived.

From Cases A and B, the Fatou set $F(f)$ contain no annuli mentioned
in Theorem \ref{thm1.3}.

\subsection{Proof of Theorem \ref{thm1.5+}}

Consider the meromorphic function with the following form
\begin{equation}f(z)=cz+d+\sum_{n=1}^\infty
c_n\left(\frac{1}{a_n-z}-\frac{1}{a_n}\right),
\end{equation}
where $c, d, c_n$ and $a_n$ are real numbers with $a_n\to \infty
(n\to\infty)$, $c\geq 0$ and $c_n>0$ such that
$$\sum_{n=1}^\infty\frac{c_n}{a_n^2}<+\infty.$$
For such a function $f$, the real axis is completely invariant under
$f$ and so $J(f)$ is completely on the real axis, see \cite{BKY}.

Set $r_n=M2^n$ for a large $M>0$. For a given real number
$\lambda>\frac{1}{2}$, define
$r_{n,k}=r_{n+1}-1+\frac{k}{[r_n^\lambda]},\ 1\leq k\leq
[r_n^\lambda]$ for each $n$, where $[x]$ is the maximal integer not
greater than $x$. We will prove that the function
$$g(z)=\sum_{n=1}^\infty\frac{1}{[r_n^\lambda]}\sum_{k=1}^{[r_n^\lambda]}\frac{2z}{r_{n,k}^2-z^2}
=\sum^\infty_{n=1}\frac{1}{[r_n^\lambda]}\sum_{k=1}^{[r_n^\lambda]}\left(\frac{1}{r_{n,k}-z}-\frac{1}{r_{n,k}+z}\right)$$
satisfies the requirement of Theorem \ref{thm1.5+}.

Given arbitrarily a large real number $r$, we have $r_n\leq
r<r_{n+1}$ for some $n$. Then
\begin{eqnarray*}n(r,f)&=&\sum_{k=1}^{n-1}[r_k^\lambda]
\leq r_n^\lambda\sum_{k=1}^{n-1}\left(\frac{r_k}{r_n}\right)^\lambda\\
&=&r^\lambda\sum_{k=1}^{n-1}\left(\frac{1}{2^{n-k}}\right)^\lambda\\
&\leq & \frac{r^\lambda}{2^\lambda-1}
\end{eqnarray*}
and
\begin{eqnarray*}n(r,f)&\geq&\sum_{k=1}^{n-1}(r_k^\lambda-1)=\frac{r_n^\lambda}{2^\lambda-1}-n\\
&=&\frac{r_{n+1}^\lambda}{2^\lambda(2^\lambda-1)}-\frac{\log(r_n/M)}{\log
2}\\
&>&\frac{r^\lambda}{2^\lambda(2^\lambda-1)}-\frac{\log(r/M)}{\log
2}.\end{eqnarray*} For $z\in A(\frac{4}{3}r_n,\frac{5}{3}r_n)$, we
have
$$|g(z)|\leq\sum_{m=1}^{n-1}\frac{1}{[r_m^\lambda]}\sum_{k=1}^{[r_m^\lambda]}\frac{2|z|}{|z|^2-r_{m,k}^2}+
\sum_{m=n}^{\infty}\frac{1}{[r_m^\lambda]}\sum_{k=1}^{[r_m^\lambda]}\frac{2|z|}{r_{m,k}^2-|z|^2}$$
$$=I_1+I_2(say).$$
We estimate
\begin{eqnarray*}I_1&\leq &\sum_{m=1}^{n-1}\frac{1}{[r_m^\lambda]}\sum_{k=1}^{[r_m^\lambda]}
\frac{30r_n}{16r_n^2-9r_{m,k}^2}\\
&\leq&\frac{30nr_n}{16r_n^2-9r_n^2}=\frac{30n}{7r_n}.
\end{eqnarray*}
Since $r_{m,k}\geq r_{m+1}-1=2r_m-1$, for $m\geq n$ we have
$$9r^2_{m,k}-25r_n^2\geq 9(2r_m-1)^2-25r_n^2>10r_m^2$$ so that
\begin{eqnarray*}I_2&\leq&\sum_{m=n}^{\infty}\frac{1}{[r_m^\lambda]}\sum_{k=1}^{[r_m^\lambda]}
\frac{30r_n}{9r_{m,k}^2-25r_n^2}\\
&<&\sum_{m=n}^{\infty}\frac{30r_n}{10r_{m}^2}\leq
3r_n\sum_{m=n}^{\infty}r_m^{-2}=\frac{3}{r_n}\sum_{m=n}^{\infty}\left(\frac{r_n}{r_m}\right)^{2}<\frac{3}{r_n}.
\end{eqnarray*}
Therefore we have
$$T(3r_n/2,g)=N(3r_n/2,g)+m(3r_n/2,g)<n(3r_n/2,g)\log(3r_n/2)+2$$
and
$$T(3r_n/2,g)=N(3r_n/2,g)+m(3r_n/2,g)>n(r_n,g)\log(3/2)-2,$$
and so
$$\lim_{n\to\infty}\frac{\log T(3r_n/2,g)}{\log (3r_n/2)}=\lambda.$$
This easily implies that $g$ has the order and lower order equal to
$\lambda$.

Obviously, $0$ is an attracting fixed point of $g$ and we can choose
a large $M$ such that $B(0,M)$ is in its attracting basin. For all
sufficiently large $n$, the annulus $A(4r_n/3,5r_n/3)$ is mapped
into $B(0,M)$. Therefore the Fatou set $F(g)$ contains a sequence of
annuli $A(4r_n/3,5r_n/3)$ with $r_n\to\infty (n\to\infty)$. Since
$J(g)$ lies in the real axis and $r_1,r_2\in J(g)$, $g$ cannot take
on $r_1$ and $r_2$ on the upper half plane and lower half plane.

\section{Remarks}

In the proofs of Theorem \ref{thmA} and Theorem \ref{th2}, the
existence of so-called filling disks is a key point.

From the proof of Theorem \ref{thmA} in \cite{qiao}, it is easily
seen that if $0<\lambda(f)<\infty$, then there exist $R_0, \tau>1$
and $N>0$ such that for all $r>R_0$, the annulus $A(r/4, 3\tau
T^{-1}(T^N(r)))$ contains a filling disk $D:\
|z-z_0|<\frac{4\pi}{\log r}|z_0|$ of $f$ with the index
$m=c^*\frac{T(R)}{(\log R)^2},$ where $c^*$ is a positive constant,
$T(r)=T(r,f)$ and $\tau r<R<\tau T^{-1}(T^N(r))$.

From the proof of Theorem \ref{th2}, we see that the annulus $A(r,
2r^2)$ for $r\geq R_0$ contains a filling disk $\Gamma:
|z-z_0|<\frac{4\pi}{\log r}|z_0|$ with index
$m=c^*\frac{T(r^2)}{(\log r)^4}.$ However, for any two sequences
$\{r_n\}$ and $\{R_n\}$ of positive numbers with $r_n<R_n<r_{n+1}$,
there exists a transcendental meromorphic function which has no
filling disks in annulus $A(r_n, R_n)$ with index $m_n\to\infty
(n\to\infty)$. For such example which has a sequence of large annuli
in the Fatou set, see \cite{ZX}. A transcendental entire function
$f$ with this property can be found in \cite{BZ}. It is obvious that
the Fatou set cannot contain a filling disk with enough large index
$m$. This is a contradiction of the iterate theory researches of a
meromorphic function to the study of value distribution.

\bigskip
\noindent \textbf{Acknowledgements.} This paper was supported by
Grant of 11571193 of the NSF of China.


\begin{thebibliography}{99}

\bibitem{baker}
I. N. Baker,
\emph{The domain of nromality of an entire function}, Ann. Acad. Sci. Fenn. Ser. AI
Math., \textbf{1} (1975), 277-283.

\bibitem{BKY}
I. N. Baker, J. Kotus and Y. L\"u, \emph{Iterates of meromorphic
functions I, Ergodic Theory and Dynamics System, \textbf{11} (1991),
241-248}

\bibitem{beardon}
A.~F.~Beardon,
\emph{Iteration of Rational Functions}.
Spinger, Berlin, 1991.


\bibitem{bergweiler}
W.~Bergweiler,
\emph{Iteration of meromorphic functions},
Bull. Amer. Math. Soc. (N. S.) \textbf{29} (1993), 151-188.

\bibitem{BH}  W. Bergweiler and A. Hinkenna, \textit{On
semiconjugation of entire functions,} Math. Proc. Cambridge Philos.
Soc., 126(1999), 565-574

\bibitem{BRS}
W.~Bergweiler, P. ~Rippon and G. ~Stallard, \textit{Multiply
connected wandering domains of entire functions}, Proc. London Math.
Soc., (3) 107 (2013), 1261-1301


\bibitem{BZ}
W.~Bergweiler and J. H. Zheng, On the uniform perfectness of the
boundary of multiply connected wandering domains, J. Aust. Math.
Soc., 91(2011),289-311.

\bibitem{DK}
R. L. Devaney and M. Krych, Dynamics of $\exp (z)$, Ergod. Th. \&
Dynam. Sys., (1984), 4, 35-52

\bibitem{DT}
R. L. Devaney and F. Tangerman, Dynamics of entire functions near
the essential singularity, Ergod. Th. \& Dynam. Sys., (1986), 6,
489-503

\bibitem{Dominguez} P. Dominguez, \textit{Dynamics of transcendental
meromorphic functions}, Ann. Acad. Sci. Fenn. Math., 23(1998),
225-250

\bibitem{eremenko}
A.~E.~Eremenko,
\emph{On the iteration of entire functions},
Dynamical Systems and Ergodic Theory (Banach
Center Publications, 23). Polish Scientific Publishers, Warsaw, (1989), 339-345.

\bibitem{gold and ostr}
A.~A.~Goldberg and I.~V.~Ostrovskii,
\emph{Value Distribution of Meromorphic Functions}.
Transl. Math. Monogr. 236, Amer. Math. Soc., 2008.

\bibitem{Hayman}
W.~K.~Hayman,
\emph{Meromorphic Functions}.
Clarendon Press, Oxford, UK, 1964.

\bibitem{milnor}
J.~Milnor,
\emph{Dynamics in One Complex Variable}.
Friedr. Vieweg $\&$ Sohn, Braunschweig, 1999.

\bibitem{Petrenko}V. P. Petrenko, On the growth of meromorphic
functions with finite order (in Russian), Isv. Akad. Nauk. SSSR,
33(1969), 414-454

\bibitem{qiao}
J. Y. Qiao, \emph{Borel directions and iterated orbits of
meromorphic functions}, Bull. Austral. Math. Soc. \textbf{61(1)}
(2000), 1-9.

\bibitem{R06}
L. Rempe, Rigidity of escaping dynamics for transcendental entire
functions, Acta Math.,

\bibitem{R07}
L. Rempe, On a question of Eremenko concerning escaping components
of entire functions, Bull. London Math. Soc., 39(2007), 661-666

\bibitem{RRS}
L. Rempe, P. J. Rippon and G. M. Stallard, Are Devaney hairs fast
escaping? J. Differences and Appl., (2010), 6(5-6), 739-762

\bibitem{RS} P.J. Rippon and G.M. Stallard, \textit{On
sets where iterates of a meromorphic function zip towards infinity},
Bull. London Math. So., 32(2000), 528-536

\bibitem{RipponStallard} P.J. Rippon and G.M. Stallard, \textit{Slow escaping points of
meromorphic functions,} Trans. Amer. Math. Soc., {\bf 363}(2011),
4171-4201

\bibitem{RS1} P.J. Rippon and G.M. Stallard,
\textit{Fast escaping points of entire functions,} Proc. London
Math. Soc., 105(2012), 787-820

\bibitem{RS19} P.J. Rippon and G.M. Stallard, Eremenko points and
the structure of the escaping set, Trans. Amer. Math. Soc.,
372(2019), no. 5, 3083-3111

\bibitem{Six} D. J. Sixsmith, Maximally and non-maximally fast
escaping points of transcendental entire functions, Math. Proc.
Camb. Phil. Soc. (2015), 158, 365-383

\bibitem{Vi} M. Viana da Silva, The differentiability of the hairs
of $\exp (z)$, Proc. Amer. Math. Soc., 103(4) (1988), 1179-1184


\bibitem{yang book}
L. Yang,
\emph{Value distribution theory}, Springer-Verlag, Berlin, Heidelberg, New York; Science
Press, Beijing, 1993.

\bibitem{zheng book}
J.~H.~Zheng,
\emph{Dynamics of Transcendental Meromorphic Functions}.
Tsinghua University Press, Beijing, 2006. (In Chinese)

\bibitem{zheng}
J.~H.~Zheng,
\emph{On multiply-connected Fatou components in iteration of meromorphic functions},
J. Math. Anal. Appl. \textbf{313} (2006), 24-37.

\bibitem{zheng 16}
J. H. Zheng, \emph{Hyperbolic metric and multiply connected
wandering domains of meromorphic functions}, Proc. Edinburg Math.
Soc., \textbf{60} (2017), 787-810.

\bibitem{zheng book 2}
J. H. Zheng, \emph{Value Distribution of Meromorphic Functions}, Tsinghua Univesity Press Beijing
and Springer-Verlag Berlin, 2010.

\bibitem{Zheng3} J. H. Zheng, \textit{Domain Contants and their
applications in dynamics of meromorphic functions}, J. Jiangxi
Normal University (Natural Science), vol. 34, no. 5(2010), 1-7

\bibitem{ZX} J. H. Zheng, \textit{Fast Escaping Sets and Existence of Large Annulus of meromorphic functions}


\end{thebibliography}
\end{document}